\newtheorem{theorem}{Theorem}
\newtheorem{lemma}[theorem]{Lemma}
\newtheorem{conjecture}[theorem]{Conjecture}
\newtheorem{corollary}[theorem]{Corollary}
\newtheorem{proposition}[theorem]{Proposition}
\newtheorem{claim}[theorem]{Claim}
\numberwithin{theorem}{section}
\newcommand{\PP}{\mathcal{P}}
\newcommand{\EE}{\mathcal{E}}
\newcommand{\FF}{\overline{F}}
\begin{document}

\title{Locally self-avoiding eulerian tours}

\author{Tien-Nam Le\\~\\
			\small Laboratoire d'Informatique du Parall\'elisme \\ \small \'Ecole Normale Sup\'erieure de Lyon \\ \small 69364 Lyon Cedex 07, France}

\date{}

\maketitle

\begin{abstract}
It was independently conjectured by H\"aggkvist in 1989 and Kriesell in 2011 that given a positive integer $\ell$, every simple eulerian graph with high minimum degree (depending on $\ell$) admits an eulerian tour such that every 
segment of length at most $\ell$ of the tour is a path. 
Bensmail, Harutyunyan, Le and Thomass\'e recently verified the conjecture for 4-edge-connected eulerian graphs. Building on that proof, we prove here the full statement of the conjecture.
This implies a variant of the path case of Bar\'at-Thomassen conjecture that any simple eulerian graph with high minimum degree can be decomposed into paths of fixed length and possibly an additional shorter path.
\end{abstract}

\section{Introduction} \label{section:introduction}

Unless stated otherwise, graphs considered here are simple and undirected, while multigraphs may contain multiple edges and loops, where each loop contributes two to the degree of the incident vertex. 
Given an eulerian tour $\EE$ of a multigraph $G$, for every positive integer $\ell$, a walk $e_1e_2...e_\ell$ where any $e_i,e_{i+1}$ are consecutive edges of $\EE$ is called a \emph{segment} of \emph{length} $\ell$ of $\EE$. We say that $\EE$ is \emph{$\ell$-step self-avoiding} if every segment of length at most $\ell$ of $\EE$ is a path, which is equivalent to that $\EE$ ``contains" no cycle of length at most $\ell$. 

H\"aggkvist (\cite{H89}, Problem 3.3) and Kriesell \cite{Kr} independently conjectured that high minimum degree is a sufficient condition for the existence of an $\ell$-step self-avoiding eulerian tour. 

\begin{conjecture}[\cite{H89,Kr}] \label{conj:eulerian}
For every positive integer $\ell$, there is an integer $d_\ell$ such that every eulerian graph
$G$ with minimum degree at least~$d_\ell$ admits an $\ell$-step self-avoiding eulerian tour.
\end{conjecture}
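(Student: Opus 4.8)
The plan is to induct on the number of vertices, reducing everything to the $4$-edge-connected case settled by Bensmail, Harutyunyan, Le and Thomass\'e; one takes $d_\ell$ at least the threshold in their theorem, so that $d_\ell$ is far larger than $\ell$. It is convenient to prove by induction a slightly stronger statement: for eulerian multigraphs $G$ with $\delta(G)\ge d_\ell$ whose \emph{ordinary} edges form a simple graph but which may also carry some \emph{virtual} edges (possibly parallel to ordinary ones), $G$ has an eulerian tour every segment of length at most $\ell$ of which either is a path or uses a virtual edge. Applied to the original graph, which has no virtual edges, this gives Conjecture~\ref{conj:eulerian}. Since $G$ is eulerian, every edge cut of $G$ has even size, so $G$ is either $4$-edge-connected — and we are done by the quoted theorem, in a multigraph version discussed below — or has a $2$-edge-cut; the whole problem is to peel off such a cut, or a cut-vertex, without losing control of the minimum degree.

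I would first dispose of cut-vertices. If $v$ is a cut-vertex and $C$ a component of $G-v$, then $v$ sends an even number of edges to $C$, so the odd-degree vertices of $G[C]$ are the neighbours of $v$ in $C$ and they are even in number; adding a perfect matching $M_C$ of virtual edges on them (suitably chosen so that every vertex recovers its $G$-degree) yields an eulerian multigraph $G_C:=G[C]+M_C$ on fewer vertices with $\delta(G_C)\ge d_\ell$. By induction each $G_C$ has a tour as above; replacing each matching edge $ww'$ of $M_C$ by the walk $w,v,w'$ makes it a closed walk through $v$ covering $G[C]\cup\{v\}$, and concatenating these over all components of $G-v$ gives an eulerian tour of $G$. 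Once $G$ is $2$-connected, any $2$-edge-cut $\{e_1,e_2\}$ has its edges ending in four distinct vertices, $u_1,v_1$ on $e_1$ and $u_2,v_2$ on $e_2$, with $u_1,u_2$ on one side $S$ and $v_1,v_2$ on the other side $T$ — a common endpoint would be a cut-vertex, using $\delta(G)>2$. I would then split $G$ into $G_S:=G[S]+u_1u_2$ and $G_T:=G[T]+v_1v_2$, adding one virtual edge to each; both are eulerian, both smaller, and every vertex keeps its degree, so $\delta(G_S),\delta(G_T)\ge d_\ell$. By induction they have tours $\EE_S,\EE_T$; deleting $v_1v_2$ from $\EE_T$ leaves a trail $Q$ from $v_1$ to $v_2$ covering $G[T]$, and replacing the virtual edge $u_1u_2$ in $\EE_S$ by the excursion $u_1,e_1,v_1,Q,v_2,e_2,u_2$ yields an eulerian tour of $G$.

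What makes the gluing preserve the invariant is that every piece of the recursion is \emph{huge}: each of its vertices is a vertex of $G$, of degree $\ge d_\ell\gg\ell$, so the piece has more than $\ell$ edges and crossing it takes more than $\ell$ steps. Hence a segment of length at most $\ell$ of the reconstructed tour either lies inside one piece — where by induction it is a path or uses a virtual edge of that piece — or meets exactly one of the virtual edges introduced at the current step, in which case it is a path inside one piece followed by a path inside a \emph{vertex-disjoint} piece, hence a path; and a would-be short cycle through a current virtual edge cannot close because crossing a piece already costs more than $\ell$ steps. The same accounting works at a cut-vertex, with one caveat.

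The main obstacle, I expect, is twofold, and in both parts one has to enter the proof of the $4$-edge-connected case rather than quote it. First, a virtual edge may be parallel to an ordinary edge, so the base case itself must be proved for eulerian multigraphs whose ordinary part is simple, with the relaxed conclusion above — which is exactly why the induction is set up that way. Second, and more delicate: the concatenation at a cut-vertex $v$ is $\ell$-step self-avoiding only if, along each of the sub-tours being spliced, the images of the matching edges $M_C$ — which are precisely the occurrences of $v$ in that sub-tour — are pairwise more than $\ell$ apart; otherwise two nearby occurrences of $v$ form a short cycle. An arbitrary tour produced by the induction need not have this property, so one needs a reinforced base case: given a small prescribed set of edges, find a tour that is $\ell$-step self-avoiding (up to virtual edges) and in which those edges lie pairwise more than $\ell$ apart along the tour. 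Proving these reinforced statements by adapting the construction of Bensmail, Harutyunyan, Le and Thomass\'e is where I expect most of the work to be.
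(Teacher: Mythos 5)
Your plan is in the right spirit and largely parallels the paper's: strengthen the statement to multigraphs with a distinguished simple part (your ``ordinary'' edges are the paper's subgraph $G'$ in Theorem~\ref{theorem: 4euler multi}), prove a $4$-edge-connected base case in that setting, and reduce the general case by peeling off $2$-edge-cuts and splicing the sub-tours, using $d_\ell\gg\ell$ to argue that an excursion into a peeled-off side is too long to fit in a segment of length $\ell$. Your $2$-edge-cut step and the accompanying ``vertex-disjoint pieces'' argument are essentially the mechanism behind the paper's Lemma~\ref{lem:separate} and Claims~\ref{lem:compatible}--\ref{lem:refarewell}.

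The genuine gap is exactly where you flag it: the cut-vertex reduction. Splicing the sub-tours of the components $C$ of $G-v$ via a matching $M_C$ does not control the spacing of the matching edges along $\EE_C$, so two detours $w\,v\,w'$ and $w''\,v\,w'''$ can land within fewer than $\ell$ steps of one another, creating a short closed segment through $v$. The ``huge pieces'' heuristic does not apply here, because the walk between two consecutive occurrences of $v$ is only a \emph{fragment} of $\EE_C$, not a full traversal of a component. Your proposed fix --- a reinforced base case in which a prescribed small edge set is spread out along the tour --- is an extra strengthening that you would still have to prove, and it is not how the paper proceeds.

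The paper sidesteps this entirely by never performing a vertex-based reduction. It only peels off $2$-edge-cuts, but allows the two cut edges to share an endpoint, in which case the corresponding dummy edge $f=u_1u_2$ (or $f'=v_1v_2$) is a \emph{loop}. Your observation that a shared endpoint of a $2$-edge-cut must be a cut-vertex is correct, but instead of treating that as a separate case you can simply let the dummy edge degenerate to a loop; the $4$-edge-connected base case is stated for multigraphs and tolerates loops, and the splicing argument goes through because the excursion replacing the loop still covers an entire inherited piece. Concretely, what makes the paper's splicing safe is not merely that pieces are huge, but the structural fact (Lemma~\ref{lem:separate}) that the contracted quotient $M_\mathcal{X}$ is a \emph{cactus}: Proposition~\ref{prop:thin} (at most two edge-disjoint paths between any two cactus vertices) forces every $X_i$-boomerang to contain \emph{all} edges at any vertex it visits outside $X_i$ (Claim~\ref{lem:farewell}), whence its length is at least $d_\ell$ (Claim~\ref{lem:refarewell}). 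Your binary recursion implicitly builds the same cactus, but the matching-based cut-vertex step destroys it, which is precisely why the spacing problem appears there. If you drop that step and run the $2$-edge-cut reduction uniformly (loops permitted), your argument aligns with the paper's and the gap disappears.
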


H\"aggkvist also asked to identify the minimum of $d_\ell$  if it exists. For the case $\ell=3$, i.e. triangle-free eulerian tours, Adelgren \cite{A95} characterized all graphs with maximum degree at most 4 which admit a triangle-free eulerian tour before Oksimets \cite{O97} proved Conjecture \ref{conj:eulerian} for $\ell=3$ with	 a sharp bound $d_3=6$.
Bensmail, Harutyunyan, Le and Thomass\'e recently verified Conjecture \ref{conj:eulerian} for 4-edge-connected eulerian graphs.
\begin{theorem}[\cite{BHLT15+}, Theorem 5.1] \label{theorem:simple 4eulerian}
For every positive integer $\ell$, there is an integer $d'_\ell$ such that every $4$-edge-connected eulerian graph
$G$ with minimum degree at least~$d'_\ell$ admits an $\ell$-step self-avoiding eulerian tour.
\end{theorem}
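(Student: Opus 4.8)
\medskip
\noindent\textbf{Proof proposal.}
Fix $\ell$, and let $G$ be a $4$-edge-connected eulerian graph whose minimum degree $\delta$ exceeds a constant $d'_\ell$ to be chosen; write $\Delta=\Delta(G)$. The plan is to work with transition systems. An eulerian tour of $G$ is the same as a choice, at each vertex $v$, of a perfect matching $\tau_v$ of the edges at $v$ such that the closed walks obtained by following these matchings glue into a single closed walk, in which case I call $\tau=(\tau_v)_v$ \emph{connected}. A cycle $C$ of length $k$ is traced by the resulting tour precisely when for every $v\in V(C)$ the two edges of $C$ at $v$ form a pair of $\tau_v$. So I must build a connected transition system in which no cycle of length $\le\ell$ is traced, and I would attack the two requirements --- \emph{connected} and \emph{no short traced cycle} --- by separate means and then make them coexist.

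\emph{Connectivity.} This I would get from the classical flexibility of eulerian tours under local moves. Any eulerian tour is turned into any other by \emph{$\kappa$-transformations}: picking two passes of the tour through a common vertex $v$ and reversing the stretch of the tour between them, which replaces two pairs of $\tau_v$ by two others while keeping $\tau$ connected. Alternatively one may build a tour from scratch as a sequence of admissible splittings at the vertices: since $G$ is $4$-edge-connected, the splitting-off theorems of Lov\'asz and Mader guarantee, while a vertex still has positive degree, a pair of incident edges whose splitting off preserves $4$-edge-connectivity, and splitting all the way down to one vertex yields a connected $\tau$. Either way I would only ever apply connectivity-preserving operations, so connectivity comes for free; the content lies entirely in choosing these operations to avoid short cycles.

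\emph{Killing short cycles.} This is the heart of the matter, and where $\delta\ge d'_\ell$ and $4$-edge-connectivity do real work. When $v$ has large degree there are $\Theta(\deg v)$ admissible options for each transition pairing at $v$, and a fixed new pair $\{e,f\}$ at $v$ can complete a traced cycle of length $\le\ell$ only together with the transitions already chosen along one of the at most $\Delta^{O(\ell)}$ short paths of $G$ joining the far ends of $e$ and $f$; so there is local slack. The trouble is that $v$ can itself lie on $\Theta(\deg v)\cdot\Delta^{O(\ell)}$ short cycles, so the events ``$C$ is traced'' are densely dependent around high-degree vertices --- densely enough that a single application of the Lov\'asz Local Lemma to a uniformly random $\tau$ fails once $\ell\ge 4$, and a vertex-by-vertex greedy need not terminate. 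I would instead argue in the style of the reservation and absorption arguments behind the $4$-edge-connected case of the path Bar\'at-Thomassen theorem in \cite{BHLT15+}: reserve at each vertex a random constant fraction of its incident edges; choose the transitions among the unreserved edges at random; bound, by a first-moment (and, for concentration, second-moment) computation restricted to short cycles disjoint from the reserved edges, the set of traced short cycles that survive; and finally spend the reserved edges together with the $4$-edge-connectivity to reroute locally around this surviving set, each rerouting being a connectivity-preserving move that, thanks to the slack at high-degree vertices, can be chosen so as to kill a surviving short cycle without creating a new one. Taking $d'_\ell$ large in terms of $\ell$ gives room at every step.

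\emph{Main obstacle.} The genuinely hard part is the coexistence of the two requirements. Connectivity is a global constraint on which transition moves are simultaneously legal, whereas the short-cycle constraints, though each local, cluster around high-degree vertices; and the surviving traced short cycles after the random step are not all far apart, so their reroutings interact. One must therefore design the reserved structure and the order of corrections so that the whole surviving configuration can be fixed without ever reintroducing a short cycle and without splitting the tour --- i.e.\ one must prove that the reroutings can be carried out sufficiently independently. Getting this bookkeeping right is where the bulk of the work lies, and where the machinery of \cite{BHLT15+} for the analogous path-decomposition problem would be pushed through. (For $\ell\le 3$ one may bypass all of this via Oksimets's theorem, $d_3=6$, \cite{O97}.)
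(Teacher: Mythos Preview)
Your proposal is a programme, not a proof. You correctly isolate the two requirements --- connectivity of the transition system and absence of traced short cycles --- and you are right that Jackson-type compatible Euler tours are the relevant tool. But the step you yourself flag as ``the bulk of the work'' is simply not carried out: you never specify the reserved structure, never bound the number or distribution of surviving traced short cycles after the random step, and never show that the corrections can be performed without cascading into new short cycles or disconnecting the tour. A first-moment count gives an expected number of surviving short cycles of order $n\cdot\Delta^{O(\ell)}/\delta$, which in general is not bounded and whose members can overlap heavily near high-degree vertices; you give no mechanism to absorb that. As written, the argument has a genuine gap precisely where you say it does.

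For comparison, the paper does not prove this theorem directly (it is quoted from \cite{BHLT15+}), but the proof of the more general Theorem~\ref{theorem: 4euler multi} in Section~\ref{section:relaxation} specialises to it and takes a quite different route. Rather than building a transition system on $G$ and correcting it, one first produces a \emph{path-decomposition} $\mathcal{P}$ of $G$ into paths of length $\ell$ or $\ell+1$ with small \emph{conflict ratio} (Theorem~\ref{ll1}); this is the step that replaces your random/absorption machinery, and it is where the heavy lifting from \cite{BHLT15+} actually sits. One then arranges, using two arc-disjoint spanning arborescences (from Propositions~\ref{prop: arc-strong} and~\ref{prop: disj-arbor}) and the subcubic spanning tree of Lemma~\ref{lemma:13-tree}, that the auxiliary multigraph $H_{\mathcal{P}}$ (one edge per path) is connected and hence eulerian. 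Finally, Jackson's theorem (Proposition~\ref{thm: Jackson}) is applied not to $G$ but to $H_{\mathcal{P}}$, with forbidden transitions given by pairs of conflicting paths; the low conflict ratio makes the hypothesis $|S_v(e)|\le d(v)/2-2$ hold, and the resulting eulerian tour of $H_{\mathcal{P}}$ lifts to an $\ell$-step self-avoiding eulerian tour of $G$. The point is that working in $H_{\mathcal{P}}$ linearises the short-cycle constraints: a traced short cycle can only arise from two \emph{consecutive} conflicting paths, so a single compatible Euler tour already kills all of them, with no iterative correction needed.
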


The main result of this paper is the following.

\begin{theorem} \label{theorem: main}
Conjecture \ref{conj:eulerian} is true.
\end{theorem}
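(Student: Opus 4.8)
The plan is to reduce Conjecture~\ref{conj:eulerian} to its $4$-edge-connected case, Theorem~\ref{theorem:simple 4eulerian}, by decomposing $G$ along its $2$-edge-cuts. Fix $\ell$, let $d_\ell$ be a sufficiently large constant (in terms of $\ell$ and $d'_\ell$), and argue by induction on $|E(G)|$. We may assume $G$ is connected, hence $2$-edge-connected, since a connected eulerian graph has no bridge. If $G$ is $4$-edge-connected we are done by Theorem~\ref{theorem:simple 4eulerian}, so suppose $G$ has a $2$-edge-cut $\{e_1,e_2\}$ with $e_i=u_iv_i$, $\{u_1,u_2\}\subseteq A$, $\{v_1,v_2\}\subseteq B$, and $A\sqcup B=V(G)$. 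Comparing $\sum_{v\in A}\deg_G(v)=2e(G[A])+2$ with $|A|d_\ell$ and with $2\binom{|A|}{2}$ forces $|A|\ge d_\ell$, and symmetrically $|B|\ge d_\ell$; in particular $e(G[A])$ and $e(G[B])$ both exceed $\ell$, and one checks (using bridgelessness) that $G[A]$ and $G[B]$ are connected.

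The heart of the reduction is a splicing lemma. Put $G_A:=G[A]+u_1u_2$ and $G_B:=G[B]+v_1v_2$; each is eulerian (the two altered endpoints keep even degree), connected, has fewer edges than $G$, and has minimum degree at least $d_\ell$ (no vertex loses degree), so by induction there are $\ell$-step self-avoiding eulerian tours $\EE_A$ of $G_A$ and $\EE_B$ of $G_B$. Cut $\EE_A$ open at $u_1u_2$ to get an eulerian trail $P_A$ of $G[A]$ from $u_2$ to $u_1$, and likewise an eulerian trail $P_B$ of $G[B]$ from $v_1$ to $v_2$; then $\EE:=e_1\,P_B\,e_2\,P_A$ is an eulerian tour of $G$. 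It is $\ell$-step self-avoiding: because $e(G[A]),e(G[B])>\ell$, no segment of $\EE$ of length at most $\ell$ contains both $e_1$ and $e_2$, so such a segment either lies inside $P_A$ or inside $P_B$ --- and is then a segment of $\EE_A$ or $\EE_B$, hence a path --- or is a segment of $P_A$ joined to a segment of $P_B$ through a single edge $e_1$ or $e_2$; in the latter case the two pieces are paths with vertex sets inside the disjoint sets $A$ and $B$, so their union along that edge is again a path.

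The obstacle --- and the real content of the argument --- is that $G_A$ and $G_B$ need not be \emph{simple}: the new edge $u_1u_2$ is a loop when $u_1=u_2$ and a parallel edge when $u_1u_2\in E(G)$, and likewise for $v_1v_2$, whereas Theorem~\ref{theorem:simple 4eulerian} is stated for simple graphs. Subdividing the new edge is not an option, since it creates a degree-$2$ vertex and breaks the induction; deleting it (legitimate in the loop case, where $G[A]$ is already eulerian) costs the shared endpoint two units of degree, and such losses accumulate when a single vertex is the common endpoint of many nested $2$-cuts (``pods''), eventually driving its degree below $d'_\ell$. Overcoming this is where the work lies: one must handle such a low-degree vertex together with all of its pods at once --- deleting it and re-inserting each pod's tour at one of its visits, which is harmless because the pods are large --- or replace it by a bounded-size $4$-edge-connected eulerian gadget with large internal minimum degree so as to reinstate the hypotheses of Theorem~\ref{theorem:simple 4eulerian} (or of a suitable multigraph strengthening of it). Verifying that every such surgery remains compatible with the splicing lemma and that the altered induction is well-founded is the bulk of the proof, and it is here that the detailed structure from the proof of Theorem~\ref{theorem:simple 4eulerian} is invoked.
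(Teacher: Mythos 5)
Your first two paragraphs are a correct sketch of the reduction and the splicing lemma, and they match the spirit of what the paper does (its Lemma~\ref{lem:separate} also iteratively splits off a minimal side of a $2$-edge-cut, creating dummy edges, and ends up with $4$-edge-connected eulerian multigraphs linked by a cactus). But the third paragraph is where the proof actually has to happen, and there you have only named the obstacle, not overcome it. Once you cut along a $2$-cut and add $u_1u_2$, the piece $G_A$ may have a loop or a parallel edge, so it is no longer in the class of graphs your induction hypothesis (or Theorem~\ref{theorem:simple 4eulerian}) covers; and as you yourself observe, the ad hoc fixes you list (delete the loop and absorb the degree loss; replace a troublesome vertex by a gadget; ``handle a low-degree vertex together with all its pods at once'') either break the degree hypothesis, are unspecified, or have no argument attached. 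This is not a minor gap to be patched by invoking ``the detailed structure from the proof of Theorem~\ref{theorem:simple 4eulerian}'': it is exactly the main technical content of the paper. The paper's resolution is Theorem~\ref{theorem: 4euler multi}, a multigraph strengthening of Theorem~\ref{theorem:simple 4eulerian} in which one only demands that segments lying entirely in a \emph{prescribed simple subgraph} be paths. With that in hand, the dummy edges created by the $2$-cut surgery are simply placed outside the prescribed subgraph and are thereby exempted from the self-avoiding requirement, and no degree bookkeeping or gadget construction is needed. Proving Theorem~\ref{theorem: 4euler multi} occupies all of Section~\ref{section:relaxation} (the notion of $F$-path, the conflict-ratio machinery, Lemmas~\ref{lemma: highmin multi} and~\ref{lemma: 4ec multi}, and Jackson's transition-system theorem); none of that is present or replaced in your proposal, so as written the argument does not close.

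A secondary point: even granting a multigraph version of Theorem~\ref{theorem:simple 4eulerian}, your one-cut-at-a-time induction still needs a check that the spliced tour is globally $\ell$-step self-avoiding after \emph{all} cuts are undone, not just after one; nested $2$-cuts mean a single length-$\le\ell$ segment of the final tour can pass through several pieces. The paper deals with this by doing the decomposition in one shot (Lemma~\ref{lem:separate}) and then proving Claims~\ref{lem:compatible}--\ref{lem:refarewell} (the boomerang analysis): any excursion of the combined tour out of a part $X_i$ has length at least the minimum degree of $G$, which exceeds $\ell$, so a short segment meets each part at most once. Your splicing argument proves the analogous fact for a single cut, but you would need to verify it survives the recursion, which in the presence of dummy edges whose ``length'' is not one real edge is not automatic.
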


Theorem \ref{theorem: main} gives an immediate corollary on edge-decomposition of graphs.
An \emph{edge-decomposition} of a graph $G$ consists of edge-disjoint subgraphs whose union is $G$.
Bar\'at and Thomassen in 2006 considered edge-decompositions of graphs into copies of a given tree and conjectured that, together with the necessary condition that $|E(H)|$ divides $|E(G)|$, large edge-connectivity may be an additional sufficient condition. 

\begin{conjecture} [Bar\'at--Thomassen conjecture, \cite{BT06}]\label{conjecture:barat-thomassen}
For any fixed tree $T$, there is an integer $c_T$ such that every 
$c_T$-edge-connected graph with number of edges divisible by $|E(T)|$ 
can be decomposed into subgraphs isomorphic to $T$.
\end{conjecture}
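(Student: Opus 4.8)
The plan is to prove Conjecture~\ref{conjecture:barat-thomassen} by a bootstrap that begins with the path case and climbs up to arbitrary trees, keeping the edge-connectivity threshold $c_T$ a function of $|E(T)|$ alone. The natural starting point is that Theorem~\ref{theorem: main} already delivers the ``eulerian core'' of the path case: an $\ell$-step self-avoiding eulerian tour of a graph~$G$, cut into consecutive segments of length~$\ell$, is exactly a decomposition of~$G$ into paths of length~$\ell$ plus at most one shorter leftover path. So the first step is to trade ``high minimum degree'' for ``high edge-connectivity'' and ``eulerian'' for ``$|E(G)|$ divisible by~$\ell$.'' Here I would use the Nash--Williams--Tutte theorem: a sufficiently edge-connected graph has many edge-disjoint spanning trees, which lets one splice in a sparse auxiliary connector so that, after deleting a subgraph of bounded size, the graph becomes connected and even, hence eulerian; the bounded remainder is absorbed at the end by brute force. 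The minimum-degree hypothesis of Theorem~\ref{theorem: main} is recovered by first contracting a partition of $V(G)$ into clusters of bounded radius (which exists in any highly edge-connected graph), decomposing the resulting eulerian multigraph, and lifting each path back through the clusters. This gives Conjecture~\ref{conjecture:barat-thomassen} for $T=P_{\ell+1}$.

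Second, I would reduce a general tree~$T$ with $m$ edges to the case of \emph{brooms}: the tree $B_{s,t}$ obtained from a path with $s$ edges by appending $t$ pendant edges at one endpoint. The reduction is structural --- decompose $T$ itself into a long path carrying a bounded family of small rooted subtrees, and show that a $B_{s,t}$-decomposition of~$G$, for suitable $s,t$ depending only on~$m$, can be re-glued into a $T$-decomposition by a local exchange argument at the branch vertices. To decompose~$G$ into brooms, combine the path case with a ``star surplus'' step: a highly edge-connected graph with every degree large contains many edge-disjoint stars, and the legs of the path decomposition are threaded into these stars to complete broom copies; divisibility of $|E(G)|$ by $m=|E(B_{s,t})|$ is what forces the counts to balance, and any residual mismatch is pushed into the bounded remainder.

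Third, the general tree case follows by induction on the number of branch vertices of~$T$, peeling off one broom-like end at a time. To keep $c_T$ independent of the \emph{shape} of~$T$ and dependent only on~$m$, I would work throughout inside a fixed auxiliary graph obtained by partitioning $V(G)$ into boundedly many parts, each spanning a highly edge-connected subgraph of bounded order, plus a bounded ``glue'': one decomposes inside the small pieces by exhaustion (finitely many cases, all subsumed by a single large enough constant) and uses the connector edges only to route paths between pieces.

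The step I expect to be the main obstacle is the re-gluing: turning a decomposition into paths (or brooms) into one into copies of~$T$. Paths and trees are not interchangeable, so this needs a genuine surgery lemma guaranteeing that in a highly edge-connected host one can place the branch vertices of the desired copies of~$T$ in the right spots and simultaneously match up the path-fragments that become their legs --- essentially a system-of-distinct-representatives argument that must stay robust against the bounded leftover forced by the divisibility condition. Making this matching work with a threshold depending only on~$m$ rather than on~$T$ is where the bulk of the technical effort lies; the eulerian-tour machinery of Theorem~\ref{theorem: main} supplies the path layer cleanly, and the jump from paths to trees is what remains to be done.
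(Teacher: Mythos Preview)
The paper does not contain a proof of Conjecture~\ref{conjecture:barat-thomassen}. It is stated purely as background: the text immediately following it says that the conjecture ``was recently solved by Bensmail, Harutyunyan, Le, Merker and Thomass\'e in~\cite{BHLMT16+},'' and the present paper then moves on to its own main result, Theorem~\ref{theorem: main}, which is the H\"aggkvist--Kriesell conjecture on $\ell$-step self-avoiding eulerian tours. So there is nothing in this paper to compare your attempt against --- you are proposing a proof of a statement the paper deliberately does not prove.

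As for the proposal itself, it is not a proof but an outline with a self-declared hole. You correctly identify the ``re-gluing'' step --- turning a path or broom decomposition into a $T$-decomposition --- as the crux, and you offer no argument for it beyond the phrase ``essentially a system-of-distinct-representatives argument.'' This is precisely where all the difficulty of the Bar\'at--Thomassen conjecture lies; the path case (which is what Theorem~\ref{theorem: main} and Corollary~\ref{theorem:eulerian2} feed into) was known well before the general tree case, and the passage from paths to arbitrary trees in~\cite{BHLMT16+} does not proceed via brooms or via any surgery on a pre-existing path decomposition. Several of the intermediate steps you sketch are also not routine: trading high minimum degree for high edge-connectivity in Theorem~\ref{theorem: main} by ``contracting clusters of bounded radius'' does not obviously preserve the self-avoiding property when you lift paths back, and the claimed reduction of an arbitrary tree to a bounded family of brooms with a threshold depending only on $|E(T)|$ is asserted without justification. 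In short, the plan names the right difficulty but does not resolve it.
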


\noindent Conjecture~\ref{conjecture:barat-thomassen} was recently solved by Bensmail, Harutyunyan, Le, Merker and Thomass\'e in~\cite{BHLMT16+}.
For a summary of the progress towards the conjecture, we hence refer the interested reader to that paper.
Before that, the path case of the conjecture was verified by Botler, Mota, Oshiro and Wakabayashi in~\cite{BMOW14}, and then was improved by
Bensmail, Harutyunyan, Le, and Thomass\'e~\cite{BHLT15+}
that, for path-decompositions, high minimum degree is a sufficient condition provided the graph is 24-edge-connected. Very recently, Klimo\v{s}ov\'a  and Thomass\'e \cite{KT} reduced the edge-connectivity condition from 24 to 3, which is known to be sharp (see \cite{BHLT15+}).

Returning to $\ell$-step self-avoiding eulerian tours, by cutting the tour found by Theorem \ref{theorem: main} into paths of length $\ell$, we obtain the following variant of the path case of Bar\'at--Thomassen conjecture.

\begin{corollary} \label{theorem:eulerian2}
For every integer $\ell \geq 2$, there is an integer $d_\ell$ such that every eulerian graph
 with minimum degree at least~$d_\ell$ can be decomposed
into paths of length $\ell$ and possibly an additional path of length less than $\ell$.
\end{corollary}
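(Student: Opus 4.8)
The plan is to derive Corollary~\ref{theorem:eulerian2} directly from Theorem~\ref{theorem: main}, so that essentially all of the work has already been done. Fix an integer $\ell\ge 2$, let $d_\ell$ be the constant supplied by Theorem~\ref{theorem: main}, and let $G$ be an eulerian graph with minimum degree at least $d_\ell$. First I would invoke Theorem~\ref{theorem: main} to obtain an $\ell$-step self-avoiding eulerian tour $\EE=e_1e_2\cdots e_m$ of $G$, where $m=|E(G)|$ and the indices are chosen so that $e_i$ and $e_{i+1}$ are consecutive edges of $\EE$ for each $i$ (and $e_m,e_1$ are consecutive as well, the tour being closed).

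Next, write $m=q\ell+r$ with $0\le r<\ell$, and cut $\EE$ into the $q$ blocks $P_j=e_{(j-1)\ell+1}e_{(j-1)\ell+2}\cdots e_{j\ell}$ for $1\le j\le q$, together with a final block $P_{q+1}=e_{q\ell+1}\cdots e_m$ of length $r$, which we discard if $r=0$. Each $P_j$ consists of consecutive edges of $\EE$ and hence is a segment of $\EE$ of length at most $\ell$; since $\EE$ is $\ell$-step self-avoiding, every such segment is a path. Thus each $P_j$ is a path, of length exactly $\ell$ when $j\le q$ and of length $r<\ell$ when $j=q+1$. The blocks $P_1,\dots,P_{q+1}$ are pairwise edge-disjoint and together use every edge of $G$ exactly once, because $\EE$ traverses each edge of $G$ once; hence $\{P_1,\dots,P_{q+1}\}$ is an edge-decomposition of $G$ into paths of length $\ell$ plus possibly one additional path of length less than $\ell$, which is exactly the claimed statement.

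I do not expect any genuine obstacle in this argument: the only points worth checking are that the blocks obtained by cutting really are segments in the sense of the definition --- that is, within each block consecutive edges are consecutive in $\EE$, which holds by construction --- and that being $\ell$-step self-avoiding controls segments of \emph{every} length up to $\ell$, so that the short leftover block $P_{q+1}$ is also forced to be a path. The entire difficulty of Corollary~\ref{theorem:eulerian2} is therefore concentrated in Theorem~\ref{theorem: main}, whose proof presumably upgrades Theorem~\ref{theorem:simple 4eulerian} by reducing the general eulerian case to the $4$-edge-connected one.
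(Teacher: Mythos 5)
Your proof is correct and is exactly the argument the paper has in mind: the paper states the corollary follows ``by cutting the tour found by Theorem~\ref{theorem: main} into paths of length $\ell$,'' which is precisely your block decomposition, with the $\ell$-step self-avoiding property guaranteeing that every block (including the short leftover) is a path. No differences to report.
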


Clearly, the theorems above cannot be extended to multigraphs; a multigraph consisting of two vertices linked by many edges is a counterexample. 
However, the main tool to prove Theorem \ref{theorem: main} is indeed a weak extension of Theorem \ref{theorem:simple 4eulerian} to multigraphs. Roughly speaking, we are happy if the eulerian tour behaves well on a given simple subgraph, not necessary on the whole multigraph.

\begin{theorem}\label{theorem: 4euler multi}
For every integer $\ell$, there is an integer $d_\ell$ such that for every 4-edge-connected eulerian multigraph $G$ with minimum degree at least $d_\ell$ and every simple subgraph $G'$ of $G$, the multigraph $G$ admits an eulerian tour of which every segment of length at most $\ell$ and consisting of only edges of $G'$ is a path.
\end{theorem}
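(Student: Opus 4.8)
The plan is to reduce the multigraph statement to the simple-graph statement of Theorem \ref{theorem:simple 4eulerian} by a reduction that suppresses multiplicities but remembers the subgraph $G'$ on which we need good behaviour. First I would observe that we may assume $G'$ is a spanning subgraph (add isolated vertices if necessary) and, more importantly, that we only care about segments lying entirely inside $G'$, so multiple edges outside $G'$ are harmless. The key construction: for each pair of vertices $u,v$ joined by $k \geq 2$ parallel edges in $G$, I would like to replace most of these parallel classes by gadgets that are simple, that preserve the eulerian property and 4-edge-connectivity, that raise the minimum degree, and — crucially — that interact with any eulerian tour only through short detours that use no edge of $G'$. A natural candidate gadget is, for each surplus parallel edge $uv$ (beyond the one we keep, which we put in $G'$ or not according to whether $uv \in E(G')$), to subdivide it and glue in a large edge-disjoint structure (e.g. a large even clique or a large 4-edge-connected eulerian ``bubble'') attached at the subdivision vertex, chosen so that traversing it forces the tour to enter and leave at prescribed points without ever creating a $G'$-segment. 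Applying Theorem \ref{theorem:simple 4eulerian} to the resulting simple graph $G^\ast$ with the appropriate threshold $d'_{\ell'}$ for a slightly larger $\ell'$ yields an $\ell'$-step self-avoiding eulerian tour of $G^\ast$; contracting the gadgets back recovers an eulerian tour of $G$, and since every gadget-traversal is a block of non-$G'$ edges, every length-$\le \ell$ segment of the recovered tour that lies in $G'$ was already (a sub-walk of) a length-$\le \ell'$ segment of the tour in $G^\ast$, hence a path.

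In more detail, the steps I would carry out are: (1) Normalise: assume every parallel class has multiplicity at most $2$ by routing; more honestly, handle arbitrary multiplicity directly by the gadget. For a class $uv$ of multiplicity $k$, keep one edge $uv$ (retained in $G^\ast$, and in $G'^{\ast}$ iff $uv\in E(G')$) and replace the other $k-1$ copies by $k-1$ vertex-disjoint paths $u\,x_i\,v$ through fresh degree-$2$ vertices $x_i$; then for each $x_i$ attach a fresh simple 4-edge-connected eulerian graph $B$ on many vertices, identifying a vertex of $B$ with $x_i$, so that $x_i$ now has degree $4 + \deg_B$, large. None of the new edges lies in $G'^{\ast}$. (2) Check that $G^\ast$ is simple (no two of the new paths share an interior vertex, the $B$'s are disjoint), eulerian (every new vertex has even degree; old degrees are preserved), 4-edge-connected (a $3$-edge-cut of $G^\ast$ would induce one of $G$ after contracting each $B$ to $x_i$ and each path to an edge — here one must be slightly careful that contracting the bubble does not create the very multi-edge we removed, but since we kept one copy $uv$ the contracted graph is exactly $G$, which is $4$-edge-connected), and has minimum degree at least $d'_{\ell'}$ provided the original $d_\ell$ was chosen at least $d'_{\ell'}$ and the bubbles are large enough. (3) Fix $\ell' := 2\ell$ (any bound exceeding the gadget diameter works) and apply Theorem \ref{theorem:simple 4eulerian} to get an $\ell'$-step self-avoiding eulerian tour $\EE^\ast$ of $G^\ast$. (4) Project: each time $\EE^\ast$ enters a bubble $B$ at $x_i$ it must eventually leave at $x_i$; the portion of $\EE^\ast$ strictly inside $B$ (together with possibly the two edges $ux_i$, $x_iv$) forms a closed or near-closed sub-walk using no $G'$-edge. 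Contract every such bubble-visit and every path $ux_iv$ back to the single retained edge $uv$ — equivalently, delete all bubble edges from $\EE^\ast$ and splice — to obtain an eulerian tour $\EE$ of $G$. (5) Verify the segment condition: take any segment $S$ of $\EE$ of length $\le \ell$ consisting only of $G'$-edges. Lifting $S$ back to $\EE^\ast$, the edges of $S$ appear in $\EE^\ast$ separated only by blocks of non-$G'$ bubble/path edges; any such block that is actually traversed contributes at least one edge, but then $S$ would contain a retained edge $uv$ whose lift is flanked by bubble edges — one checks that consecutive $G'$-edges of $\EE$ are either consecutive in $\EE^\ast$ or separated by exactly a path $u x_i v$ plus a bubble tour, a walk of bounded length $\le \ell' - \ell$. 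Hence $S$ lifts to a sub-walk of a single segment of $\EE^\ast$ of length $\le \ell' $, which is a path; since being a path is inherited by sub-walks, $S$ is a path.

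The main obstacle I expect is step (5) combined with the precise design of the gadget: one needs the gadget to guarantee that an eulerian tour cannot ``pass through'' it using only a bounded number of non-$G'$ edges in a way that would let a short $G'$-segment straddle it, while simultaneously keeping $\ell'$ bounded in terms of $\ell$ alone (so that $d_\ell$ depends only on $\ell$). If a naive bubble allows the tour to thread in and out many times, the separating blocks could be long and a $G'$-segment of length $\le\ell$ in $\EE$ might unfold into a segment of length much larger than any fixed $\ell'$ in $\EE^\ast$; the fix is to make the only access point of each bubble a single cut vertex $x_i$ of degree exactly $4+\deg_B$ inside $G^\ast \setminus (B-x_i)$... more precisely to ensure $x_i$ separates $B$ from the rest, so that once the tour is at $x_i$ it either takes $ux_i,x_iv$ consecutively (a single non-$G'$ detour of length $2$) or descends into $B$ and returns to $x_i$ before using $ux_i$ or $x_iv$; in all cases two $G'$-edges of $\EE$ adjacent through this spot are separated in $\EE^\ast$ by a walk of length at most $2 + |E(B)|$, which is constant, so taking $\ell' = \ell + 2 + |E(B)|$ with $B$ a fixed small 4-edge-connected eulerian graph (e.g. $K_5$) closes the argument. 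An alternative, cleaner route that avoids delicate tour-threading is to keep \emph{two} copies of each surplus parallel edge and subdivide each once, adding a single fresh vertex of degree $2$ per copy and then pairing fresh vertices up into a large auxiliary 4-edge-connected eulerian graph globally; whichever variant is used, the essential point is that the modification is confined to non-$G'$ edges and adds only a constant to the relevant segment-length parameter.
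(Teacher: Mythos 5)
Your approach — reduce the multigraph case to the simple-graph Theorem~\ref{theorem:simple 4eulerian} by replacing parallel edges with ``bubble'' gadgets living entirely outside $G'$, then project the tour back — is a genuinely different route from what the paper does (the paper never invokes Theorem~\ref{theorem:simple 4eulerian}; it redevelops the path-collection machinery of~\cite{BHLT15+} in an $F$-path relaxation and builds the tour directly via Lemma~\ref{lemma: highmin multi}, Lemma~\ref{lemma: 4ec multi} and Jackson's transition theorem). Unfortunately your construction, as written, does not produce a 4-edge-connected graph, so Theorem~\ref{theorem:simple 4eulerian} cannot be applied, and the way you try to verify 4-edge-connectivity is where the error lives.

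Concretely: you replace a surplus parallel edge $uv$ by a path $u\,x_i\,v$ and attach a bubble $B_i$ to $x_i$ by identifying a vertex of $B_i$ with $x_i$. Then $\{ux_i,\,x_iv\}$ is a $2$-edge-cut of $G^\ast$, separating $\{x_i\}\cup(B_i - x_i)$ from the rest. Your check in step~(2) says a $3$-edge-cut of $G^\ast$ ``would induce one of $G$ after contracting each $B$ to $x_i$ and each path to an edge.'' That implication is false precisely for this cut: after the contraction the small side $\{x_i\}\cup(B_i-x_i)$ disappears entirely, so the cut degenerates to the trivial partition $(\emptyset, V(G))$ and corresponds to no cut of $G$ at all. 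Small edge-cuts of $G^\ast$ whose small side is contained in the gadget are invisible in $G$, so the 4-edge-connectivity of $G$ gives you nothing about $G^\ast$. This is not cosmetic: any gadget hung off a single cut vertex of degree $d$ in the rest of the graph creates a $d$-edge-cut, and here $d=2$.

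The natural fix — attach each bubble to the rest of $G^\ast$ by at least four edges (e.g.\ connect it to several $x_j$'s, or fuse all bubbles into one global auxiliary 4-edge-connected eulerian graph, as your ``alternative, cleaner route'' suggests) — destroys the other pillar of your argument, the clean projection in step~(4). Once the bubble region is $4$-edge-connected to the rest, $x_i$ is no longer a cut vertex: an eulerian tour of $G^\ast$ may enter the gadget region via $ux_i$, wander to some $x_j$, exit into $G$ there, and only much later come back and cover $x_iv$. There is then no canonical pairing of gadget excursions with traversals of the deleted parallel edges, so it is not clear how to recover an eulerian tour $\EE$ of $G$ from $\EE^\ast$ at all, let alone one inheriting the $\ell$-step self-avoiding property on $G'$. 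You would also have to re-check parity at $u$ and $v$ under whatever attachment scheme you use (replacing one edge at $u$ by, say, two attachment edges flips the parity contribution by $+1$ per surplus edge). So there is a genuine tension between making $G^\ast$ 4-edge-connected and keeping a well-defined, locally confined projection; your write-up does not resolve it, and it is exactly this tension that the paper's direct $F$-path approach is designed to sidestep — there, one never needs a simple 4-edge-connected host, because Lemma~\ref{lemma: highmin multi} already handles multigraphs by treating non-$F$ edges as free ``jumps'' inside an $F$-path, and connectivity of the tour is obtained at the level of the auxiliary graph $H_\PP$ rather than of a modified copy of $G$.
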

This paper is organized as follows. We start by recalling some preliminary results in Section \ref{section:pre}. 
Then we use Theorem \ref{theorem: 4euler multi} as a black box to prove Theorem \ref{theorem: main} in Section \ref{section:main} before proving Theorem \ref{theorem: 4euler multi} in the last section.


\section{Preliminaries}\label{section:pre}
In this section we present all the auxiliary results necessary for our proof of
Theorem  \ref{theorem: main}. Given a multigraph $G$, let $V(G)$ and $E(G)$ denote its vertex and edge sets, respectively. 
For any subset $X$ of $V(G)$, let $G[X]$ denote the subgraph of $G$ induced by $X$. Given a
vertex $v$ of $G$, we denote by $d_G(v)$ 
the degree of $v$ in $G$. 
Given a subgraph $H=(V,F)$ of a multigraph $G = (V, E)$, we denote 
by $G \backslash H$
the multigraph $(V, E \backslash F)$.

We start by recalling the definition of cactus graphs. A connected loopless multigraph $G$ 
is a \emph{cactus} if every edge belongs to at most one cycle. 
The singleton graph is a cactus by convention.
Clearly, if a cactus is eulerian then every edge belongs to exactly one cycle. The following is a well-known property of cactus graphs.

\begin{proposition}\label{prop:thin}
There are at most two edge-disjoint paths between any two distinct vertices of a cactus.
\end{proposition}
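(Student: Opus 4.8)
The plan is to prove the equivalent statement that a cactus contains no three pairwise edge-disjoint paths between two fixed distinct vertices; the obstruction we will produce is an edge lying on two distinct cycles, which a cactus forbids by definition. Throughout I would use the elementary fact that every edge of an \emph{even} subgraph (one in which all degrees are even) lies on a cycle: such an edge cannot be a bridge, since deleting a bridge would leave a component with exactly one vertex of odd degree, which is impossible.

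So suppose $u,v$ are distinct vertices of a cactus $G$ and, for contradiction, $P_1,P_2,P_3$ are pairwise edge-disjoint $u$--$v$ paths. Let $e_i$ be the edge of $P_i$ incident with $u$; since the $P_i$ are edge-disjoint, $e_1,e_2,e_3$ are three distinct edges at $u$. First I would observe that $F:=E(P_1)\cup E(P_2)$ spans an even subgraph of $G$: a vertex distinct from $u,v$ is an interior vertex of $0$, $1$ or $2$ of the two paths and so has degree $0$, $2$ or $4$ in $F$, while $u$ and $v$ each have degree exactly $2$. Hence $e_1$ lies on a cycle $C\subseteq F$ of $G$, and since $e_1$ and $e_2$ are the only edges of $F$ at $u$, this cycle $C$ must use both $e_1$ and $e_2$. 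Applying the same reasoning to $P_1$ and $P_3$ yields a cycle $C'\subseteq E(P_1)\cup E(P_3)$ of $G$ through $e_1$ and $e_3$.

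To finish, note that $e_2\in E(C)$ while $e_2\notin E(C')$, because $C'$ uses only edges of $P_1$ and $P_3$, which are edge-disjoint from $P_2$; hence $C\neq C'$, yet $e_1$ belongs to both. Thus $e_1$ lies on two distinct cycles of $G$, contradicting the cactus property, so at most two pairwise edge-disjoint $u$--$v$ paths can exist. I do not anticipate a genuine obstacle: the only points to verify are the parity bookkeeping at $u$ and $v$ and the ``edge of an even subgraph lies on a cycle'' lemma. A longer but equally valid route would go through the block--cut tree of $G$ --- every block of a cactus is a single edge or a cycle, and a $u$--$v$ path splits into pieces lying in consecutive blocks along the block-path from $u$ to $v$, each block contributing at most two edge-disjoint sub-paths --- but the argument above is shorter and self-contained.
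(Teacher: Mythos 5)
The paper cites Proposition~\ref{prop:thin} as a well-known fact and supplies no proof, so there is no argument in the text to compare yours against. Your argument is correct and self-contained: the union $F$ of two edge-disjoint $u$--$v$ paths is an even subgraph, the edge $e_1$ at $u$ is therefore not a bridge of $F$ and lies on a cycle $C\subseteq F$, and since $e_1,e_2$ are the only two $F$-edges at $u$ the cycle $C$ must contain both; repeating with $P_1,P_3$ produces a second cycle $C'$ through $e_1$ that avoids $e_2$, so $C\neq C'$ and $e_1$ sits on two distinct cycles, contradicting the cactus condition. One detail worth keeping in mind, since you phrased things in simple-graph language: the paper defines cacti as loopless \emph{multigraphs}, but your reasoning, including the ``an edge of an even graph is never a bridge'' lemma, carries over verbatim to multigraphs (and correctly rules out, for instance, three parallel $uv$-edges). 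The block--cut-tree route you sketch at the end is the more structural textbook argument, but your even-subgraph proof is shorter, avoids invoking block decompositions, and is just as rigorous, so either would serve.
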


We also recall three classical results. All of them are originally stated only for loopless multigraphs, but can be trivially generalized to multigraphs. Here we state their multigraph version. The first result due to de Werra (cf. \cite{dewerra}, Theorem 8.7), asserting that every multigraph has a balanced improper edge-coloring. 

\begin{proposition}\label{werra}
Let $G$ be a multigraph  and $k\ge 2$ be an integer. 
There is an improper edge-coloring of $G$ with $k$ colors 
 such that for every vertex $v$ and every 
pair of colors $i \neq j$, we have $|d_i(v)-d_j(v)|\le 4$, where $d_i(v)$ is the number of edges of color $i$ incident with $v$. 
\end{proposition}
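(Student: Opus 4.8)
The plan is to build the colouring from an Euler tour, handling loops, the parity of the degrees, and the number $k$ of colours in turn.

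First I would dispose of loops. A loop at a vertex $v$ is incident only with $v$ and contributes exactly $2$ to one colour class at $v$; so, given any $k$-edge-colouring of the loopless multigraph obtained from $G$ by deleting all loops, I can reinsert the loops and, independently at each vertex, distribute the loops there as evenly as possible among the $k$ colours. This changes no $d_i(w)$ for $w$ carrying no loop and alters $|d_i(v)-d_j(v)|$ by at most $2$ at the others. Hence it suffices to prove the bound, with a correspondingly smaller constant, for loopless multigraphs.

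Next, parity. In each connected component $C$, the set $O_C$ of odd-degree vertices has even size; introduce a single new vertex $z_C$ joined by one edge to each vertex of $O_C$. Now every degree is even, so each component is Eulerian; I would colour this enlarged multigraph and then delete $z_C$ together with its incident edges. Since each original vertex loses at most one edge, each $|d_i(v)-d_j(v)|$ grows by at most $1$. So we are reduced to colouring a loopless \emph{Eulerian} multigraph, with room to spare in the final constant. For $k=2$ this last step is immediate: fix an Euler tour $e_1e_2\cdots e_m$ (indices read cyclically), rooted at $z_C$ wherever possible, and colour $e_i$ by the parity of $i$. Each pass of the tour through a vertex uses two cyclically consecutive edges, which receive opposite colours, so up to a defect of at most $2$ at the single vertex that closes the tour in its component the colouring is balanced at every original vertex; adding back the $z_C$-edges and the loops then costs at most $1+2$, comfortably within the allowed $4$.

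The substance of the argument is the case $k\ge 3$, where the naive ``colour the tour cyclically $1,2,\dots,k,1,2,\dots$'' can fail badly, since a vertex all of whose passes fall at congruent positions along the tour sees only two colours. Here I would either peel the colour classes off one at a time — using a degree-constrained-subgraph argument (certified by a Hall-type count, or assembled from suitably weighted $2$-splits obtained exactly as above) to extract, at the stage where $t$ colours remain, a spanning subgraph meeting each vertex $v$ in a number of edges within $1$ of $d_H(v)/t$ for the current remaining graph $H$, and checking by a short induction on $t$ that these roundings do not accumulate beyond a bounded amount — or, alternatively, start from an arbitrary $k$-colouring and repeatedly rebalance along an alternating walk between the two most lopsided colours at an offending vertex, monitoring a potential such as $\sum_v\sum_i d_i(v)^2$. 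I expect the main obstacle to be precisely this step: keeping the per-stage rounding error bounded by an absolute constant while honestly accounting for the genuine obstructions — odd components, which already forbid an exactly balanced $2$-split of a triangle, and loops — is what forces the slack of $4$ rather than an exact $\lfloor d/k\rfloor$–$\lceil d/k\rceil$ split; the loop reinsertion and the parity reduction, by contrast, are routine bookkeeping.
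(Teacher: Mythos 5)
The paper does not prove this proposition; it is cited as a known result (de Werra; see \cite{dewerra}, Theorem~8.7), so there is no in-paper proof to compare against. Your reductions are sound and standard: disposing of loops by redistributing them evenly after colouring, fixing parity by attaching one dummy vertex per component to its odd-degree vertices, and handling $k=2$ by alternating colours along an Euler tour rooted at the dummy vertex. The bookkeeping is a little loose (you double-count the closing defect together with the dummy-deletion cost, and in a component with all degrees even and an odd number of edges the closing defect lands on a real vertex), but with the allowed slack of $4$ these details work out.

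The genuine gap is that you leave the case $k\ge 3$ unproved. You correctly observe that cyclic colouring of an Euler tour can fail, and you name two candidate strategies, namely peeling colour classes via degree-constrained subgraphs built from iterated $2$-splits, and de Werra's alternating-walk rebalancing with a potential such as $\sum_v\sum_i d_i(v)^2$, but you carry neither through, and you say yourself that this is where the difficulty lies. That is precisely the non-routine content of the proposition. For the rebalancing approach one must argue about where an $(i,j)$-alternating walk from an offending vertex can terminate (back at $v$, or at a vertex whose $(i,j)$-imbalance has the opposite sign) and then show the potential strictly decreases; for the peeling approach one must show that the $\pm 1$ rounding errors introduced at each of the $k-1$ extraction steps do not accumulate beyond an absolute constant. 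Neither argument is supplied, so the proposal is an outline rather than a proof of the only part that is not routine.
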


The second is a result by Nash-Williams  \cite{NW60}
implying that every multigraph with high edge-connectivity admits a balanced orientation with
high arc-connectivity. In the following, a directed multigraph $D$ is \emph{$k$-arc-strong} if the removal of any set
of at most $k-1$ arcs leaves $D$ strongly-connected, and $d^+_D(v)$ and  $d^-_D(v)$ denote the outdegree and indegree of $v$ in $D$, respectively.

\begin{proposition} \label{prop: arc-strong}
Every $2k$-edge-connected multigraph has
an orientation $D$ such that $D$ is $k$-arc-strong and $|d^-_D(v) - d^+_D(v)| \leq 1$ for every vertex $v$.
\end{proposition}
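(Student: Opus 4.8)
I would begin with the Eulerian case, which is immediate and pinpoints where the difficulty lies. If every vertex of $G$ has even degree, orient an Euler tour of $G$ (which is connected, being $2k$-edge-connected): the resulting orientation $D$ has $d^+_D(v)=d^-_D(v)$ for every vertex, hence $d^+_D(X)=d^-_D(X)=d_G(X)/2\ge k$ for every nonempty proper $X\subseteq V(G)$, so $D$ is $k$-arc-strong and trivially balanced. So suppose $G$ has vertices of odd degree. The plan is then to produce \emph{some} $k$-arc-strong orientation of $G$ and to rebalance it without losing $k$-arc-strength. The existence of a $k$-arc-strong orientation of a $2k$-edge-connected multigraph is Nash-Williams' orientation theorem \cite{NW60}, which I would simply invoke (it can also be obtained by a splitting-off argument).

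For the rebalancing, take a $k$-arc-strong orientation $D$ minimizing $\Phi(D):=\sum_{v\in V(G)}\bigl|d^+_D(v)-d^-_D(v)\bigr|$ and suppose, for contradiction, that some vertex $v$ has $d^+_D(v)-d^-_D(v)\ge 2$ (the opposite sign is symmetric). Because $\sum_v\bigl(d^+_D(v)-d^-_D(v)\bigr)=0$ there is a vertex $w$ with $d^-_D(w)>d^+_D(w)$, and because $D$ is strongly connected there is a directed $v$--$w$ path $P$ (with at least one arc, since $v\ne w$). Reversing every arc of $P$ alters $d^+_D(x)-d^-_D(x)$ by $-2$ at $v$, by $+2$ at $w$, and by $0$ at each internal vertex, and since $d^+_D(v)-d^-_D(v)\ge 2$ this strictly decreases $\Phi$. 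As $\Phi$ is a nonnegative integer this process must stop, and at the minimum every vertex satisfies $\bigl|d^+_D(v)-d^-_D(v)\bigr|\le 1$; by the parity $d^+_D(v)-d^-_D(v)\equiv d_G(v)\pmod 2$ this equals $0$ at even-degree vertices and $\pm1$ at odd-degree ones.

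The main obstacle is that reversing $P$ may destroy $k$-arc-strength. By Menger's theorem $D$ is $k$-arc-strong iff $d^+_D(X)\ge k$ for every nonempty proper $X\subseteq V(G)$, and reversing the $v$--$w$ path $P$ leaves every such cut value unchanged except that it \emph{decreases by one} for every $X$ with $v\in X\not\ni w$; such an $X$ may be \emph{tight}, i.e.\ satisfy $d^+_D(X)=k$. One must therefore choose $w$ and route $P$ with care. Submodularity of $X\mapsto d^+_D(X)$ implies that the tight out-sets containing $v$ are closed under intersection and so have a unique minimal member $X^\ast$; combining this with the positive out-excess of $v$, one shows that either a valid target $w$ can be chosen outside every tight out-set through $v$ (so that reversing $P$ keeps all cut values at least $k$ while decreasing $\Phi$), or else a reversal confined to $X^\ast$ strictly decreases a secondary potential — for instance the number of tight out-sets — without increasing $\Phi$; iterating then delivers a balanced $k$-arc-strong orientation. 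This refinement is precisely the content of Nash-Williams' theorem, so for the complete argument I would defer to \cite{NW60}; a polyhedral proof is also available, using that the in-degree vectors of the $k$-arc-strong orientations of $G$ are the integer points of a suitable submodular base polytope, on which an exchange argument reaches the almost-balanced vector.
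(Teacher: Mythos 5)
The paper gives no proof of Proposition~\ref{prop: arc-strong}; it is stated as a known consequence of Nash-Williams' orientation theorem \cite{NW60}, which is exactly where your proposal also ends up. Your interpolating sketch is sound as far as it goes, but the dichotomy in your last paragraph is unnecessary: by submodularity every tight out-set through $v$ contains the minimal one $X^\ast$, and since $X^\ast$ is tight we have $\sum_{u\in X^\ast}\bigl(d^+_D(u)-d^-_D(u)\bigr)=d^+_D(X^\ast)-d^-_D(X^\ast)=k-d^-_D(X^\ast)\le 0$, so some $w\in X^\ast$ has $d^-_D(w)>d^+_D(w)$ and no tight out-set separates $v$ from this $w$, whence reversing any directed $v$--$w$ path strictly decreases $\Phi$ while keeping all out-cuts at least $k$, with no secondary potential needed.
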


The third result by Edmonds \cite{E73} expresses a condition for a directed multigraph to admit
many arc-disjoint rooted arborescences. In the statement, an \emph{out-arborescence}
of a directed multigraph $D$ refers to a rooted spanning tree $T$ of $D$ whose arcs are oriented
in such a way that the root has indegree~0, and every other vertex has indegree~1. 

\begin{proposition} \label{prop: disj-arbor}
A directed multigraph $D$
has $k$ arc-disjoint out-arborescences rooted at a given vertex $v$ if and only if for any vertex $u\ne v$, there are
$k$ arc-disjoint paths from $v$ to $u$.
\end{proposition}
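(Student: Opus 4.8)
The plan is to reduce the statement to a cut condition via Menger's theorem and then prove the nontrivial implication by induction on $k$, the core being the extraction of a single spanning out-arborescence that damages connectivity as little as possible.

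First I would reformulate the hypothesis. For a set $X\subseteq V(D)$, write $\varrho_D(X)$ for the number of arcs of $D$ entering $X$. By the arc version of Menger's theorem for directed multigraphs, the condition ``for every $u\neq v$ there are $k$ arc-disjoint paths from $v$ to $u$'' is equivalent to the cut condition $\varrho_D(X)\ge k$ for every $\emptyset\neq X\subseteq V(D)\setminus\{v\}$. With this in hand the ``only if'' direction is immediate: each of the $k$ arc-disjoint out-arborescences contains a directed $v$--$u$ path, and these $k$ paths are pairwise arc-disjoint. So everything is in the ``if'' direction, which I would prove by induction on $k$; the case $k=0$ is trivial.

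For the induction step it suffices to find a single spanning out-arborescence $T$ rooted at $v$ such that $D\backslash T$ still satisfies $\varrho(X)\ge k-1$ for every $\emptyset\neq X\subseteq V(D)\setminus\{v\}$; the inductive hypothesis then produces $k-1$ further arc-disjoint out-arborescences rooted at $v$ inside $D\backslash T$, which together with $T$ gives $k$. To build $T$ I would grow a sub-arborescence greedily: keep an arc set $F$ that is an out-arborescence rooted at $v$ spanning a vertex set $U\ni v$ and satisfies the invariant $(\star)$ that $\varrho_{D\backslash F}(X)\ge k-1$ for every $\emptyset\neq X\subseteq V(D)\setminus\{v\}$. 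Starting from $F=\emptyset$ this holds. Choosing $F$ with $|U|$ maximum, it suffices to show that if $U\neq V(D)$ then some arc $a$ from $U$ to $V(D)\setminus U$ can be added with $F\cup\{a\}$ still satisfying $(\star)$ (such an arc keeps $F$ an out-arborescence, since its head was previously unspanned), contradicting maximality; hence $U=V(D)$ and $T:=F$ works. Note that as $F$ has no arc entering $V(D)\setminus U$, we have $\varrho_{D\backslash F}(V(D)\setminus U)=\varrho_D(V(D)\setminus U)\ge k\ge 1$, so at least one arc does leave $U$.

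The hard part is establishing that an extending arc exists. Call $\emptyset\neq X\subseteq V(D)\setminus\{v\}$ \emph{tight} if $\varrho_{D\backslash F}(X)=k-1$; submodularity of $\varrho_{D\backslash F}$ shows that two tight sets with nonempty intersection have tight union and tight intersection. If no arc from $U$ to $V(D)\setminus U$ can be added, then every such arc $a=(x,y)$ enters some tight set (with $y$ inside and $x$ outside). Pick an arc $a^*=(x^*,y^*)$ from $U$ to $V(D)\setminus U$ and a tight set $X$ with $y^*\in X$, $x^*\notin X$, such that $|X|$ is smallest possible over all such (arc, tight set) pairs. Then $X\cap U\neq\emptyset$: otherwise $X\subseteq V(D)\setminus U$ and $\varrho_{D\backslash F}(X)=\varrho_D(X)\ge k$, contradicting tightness; and $X\cap(V(D)\setminus U)\neq\emptyset$ since $y^*\in X$. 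Put $X'=X\cap U$ and $X''=X\setminus U$. Using $\varrho_{D\backslash F}(X')\ge k-1$, $\varrho_{D\backslash F}(X'')=\varrho_D(X'')\ge k$, $\varrho_{D\backslash F}(X)=k-1$, and the identity $\varrho(X')+\varrho(X'')=\varrho(X)+d(X'\to X'')+d(X''\to X')$ (arcs counted in $D\backslash F$), we get $d(X'\to X'')+d(X''\to X')\ge k$. If some arc went from $X'$ to $X''$ it would be an arc from $U$ to $V(D)\setminus U$ with both ends in $X$; intersecting its witnessing tight set with $X$ yields a strictly smaller tight set witnessing the same arc, contradicting the minimal choice of $X$. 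Hence all $\ge k$ of these arcs go from $X''$ to $X'$, so no arc of $D\backslash F$ enters $X''$ with tail in $X$; therefore every arc entering $X''$ enters $X$ from outside $X$, giving $\varrho_{D\backslash F}(X)\ge\varrho_{D\backslash F}(X'')\ge k$, contradicting tightness. This contradiction supplies the extending arc, and the induction is complete.
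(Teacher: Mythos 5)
The paper does not prove Proposition~\ref{prop: disj-arbor} at all: it is stated as a black box and attributed to Edmonds~\cite{E73}, so there is no ``paper proof'' to compare against. Your argument is a correct, self-contained proof, and it is essentially Lov\'asz's classical proof of Edmonds' branching theorem. The reformulation via Menger's theorem into the cut condition $\varrho_D(X)\ge k$, the induction on $k$ reducing to peeling off one spanning out-arborescence while preserving $\varrho\ge k-1$, the greedy growth of $F$ on a vertex set $U$ with maximality of $|U|$, and the uncrossing of tight sets (choosing a minimal tight set $X$ blocking some boundary arc, splitting $X$ into $X'=X\cap U$ and $X''=X\setminus U$, and deriving a contradiction from the absence of $X'\to X''$ arcs) are all executed correctly. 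Two small remarks: the intermediate estimate $d(X'\to X'')+d(X''\to X')\ge k$ obtained from the submodularity-type identity is not actually needed for the contradiction --- once you know $d(X'\to X'')=0$, the bound $\varrho_{D\backslash F}(X)\ge\varrho_{D\backslash F}(X'')\ge k$ already clashes with tightness; and when invoking that the intersection of two crossing tight sets is tight, it is worth noting explicitly that both sets lie in $V(D)\setminus\{v\}$ so the invariant's lower bound applies to both $X\cap Y$ and $X\cup Y$, which is what forces equality in submodularity. Neither of these is a gap; the proof is sound and provides the proposition with a genuine proof where the paper only supplies a citation.
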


We close this section with a result by Jackson (cf. \cite{J93}, Theorem 6.3). Given a loopless multigraph $G$, for every vertex $v$, let $E_v$ be the set of edges incident with $v$. A \emph{generalized transition system} $\mathcal{S}$
of $G$ is a set of functions $\{S_v\}_{v \in V(G)}$ with $S_v: E_v \to 2^{E_v}$
such that  $e_2 \in S_v(e_1)$ whenever  $e_1 \in S_v(e_2)$. 
We say that an eulerian tour $\mathcal{E}$ is \emph{compatible}
with $\mathcal{S}$ if for any two edges $e_1$ and $e_2$ such that $e_1 \in S_v(e_2)$ for some $v$, then
$e_1$ and $e_2$ are not consecutive edges of $\mathcal{E}$.

\begin{proposition}
\label{thm: Jackson}
Let $\mathcal{S}$ be a generalized transition system of a loopless eulerian multigraph $G$ such that $|S_v(e)|=0$ if $d(v)=2$ and $|S_v(e)| \leq d(v)/2 - 2$ if $d(v)\ge 4$ for any vertex $v$ and any edge $e$ incident with $v$. Then $G$ 
admits an eulerian tour compatible with $\mathcal{S}$.
\end{proposition}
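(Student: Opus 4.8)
\noindent The plan is to argue by induction, the single reduction step being to \emph{completely split off} a vertex of large degree. Concretely I would induct on the number of vertices of degree at least~$4$. In the base case every vertex has degree~$2$, so $G$ is a cycle, and since the hypothesis forces $S_v\equiv\emptyset$ at every vertex of degree~$2$, any closed traversal of $G$ is a compatible eulerian tour.

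For the inductive step, fix a vertex $v$ with $d(v)\ge 4$. Since $G$ is eulerian, every component of $G\setminus v$ meets $v$ in an even, hence nonzero, number of edges, so $G\setminus v$ has at most $d(v)/2$ components. I would pick a perfect matching $M$ of the set $E_v$ of edges incident with $v$, form $G^{*}$ by deleting $v$ and replacing each pair $\{e_1=uv,\,e_2=wv\}\in M$ by a single edge $f=uw$ (replaced by two parallel edges through a fresh degree-$2$ vertex if $u=w$, so that $G^{*}$ stays loopless), and equip $G^{*}$ with the transition system $\mathcal S^{*}$ in which $f$ inherits the role of $e_1$ at $u$ and of $e_2$ at $w$: for an edge $g$ of $G^{*}$ at $u$ we declare $g\in S^{*}_u(f)$ precisely when $g$'s inherited edge at $u$ lies in $S_u(e_1)$, symmetrically at $w$, while $\mathcal S^{*}$ is unchanged at vertices not incident with $v$. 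Because $d_{G^{*}}(x)=d_G(x)$ for every $x\neq v$ and no set $S^{*}_x(\cdot)$ is larger than the corresponding $S_x(\cdot)$, the pair $(G^{*},\mathcal S^{*})$ again satisfies the hypothesis; $G^{*}$ is eulerian, has strictly fewer vertices of degree at least~$4$, and --- provided $M$ was chosen well --- is connected. By induction $G^{*}$ has an eulerian tour $\EE^{*}$ compatible with $\mathcal S^{*}$. I would then lift $\EE^{*}$ to $G$: each time $\EE^{*}$ traverses a split edge $f$ arising from $\{e_1,e_2\}$, reroute it through $v$ along $e_1,e_2$. The resulting closed walk $\EE$ is an eulerian tour of $G$; its transitions at $v$ are exactly the pairs of $M$, while the only other new transitions are of the shape ``(the $\EE^{*}$-neighbour $g$ of $f$ at $u$)$\,$--$\,e_1$'', which is allowed precisely because $\EE^{*}$ is compatible with $\mathcal S^{*}$ and hence $g\notin S^{*}_u(f)$. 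So $\EE$ is compatible with $\mathcal S$, closing the induction.

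The whole argument therefore reduces to one splitting lemma: \emph{at the chosen vertex $v$ there is a perfect matching $M$ of $E_v$ that is simultaneously compatible with $\mathcal S$ (no pair of $M$ is a forbidden transition at $v$) and connectivity-preserving (contracting $M$ onto the components of $G\setminus v$ yields a connected multigraph); the loop-freeness issue is a minor technicality handled by the parallel-edge subdivision.} This is exactly where the hypothesis $|S_v(e)|\le d(v)/2-2$ is used, and I expect it to be the crux. It forces the ``compatibility graph'' on vertex set $E_v$ --- join two edges when placing them in a common transition at $v$ is allowed --- to have minimum degree at least $d(v)/2+1$, one above the Dirac threshold $d(v)/2$ for a perfect matching; this surplus of~$1$ should be just enough room to satisfy the connectivity demand as well. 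I would start from a connectivity-preserving complete splitting produced by a splitting-off argument in the spirit of Lov\'asz's theorem, then delete its forbidden pairs one at a time by swaps along suitable alternating cycles, spending the degree surplus to keep each swap both allowed and connectivity-preserving. A crude union bound over forbidden matchings is visibly too weak --- it already fails when $d(v)$ is large and $G\setminus v$ divides $E_v$ into two roughly equal halves with no cross-pair forced --- so the forbidden-pair repair and the connectivity bookkeeping genuinely have to be interleaved, much as in the proof of Lov\'asz's theorem itself. (An alternative route, running into the same obstruction at large degrees and demanding a comparably delicate treatment, starts from an arbitrary eulerian tour and repeatedly fixes a violating transition by reversing a segment of the tour.)
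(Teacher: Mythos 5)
The paper does not prove this proposition: it is quoted as a black box from Jackson's survey (\cite{J93}, Theorem~6.3), so there is no ``paper's own proof'' to compare against. Your review therefore has to assess the proposal on its own terms.

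The skeleton you set up is the right one and is, as far as I am aware, how results of this type are proved in the literature: induct on the number of vertices of degree at least~$4$; at such a vertex $v$ perform a complete splitting along a perfect matching $M$ of $E_v$; pass the transition system through the split (with the loop/parallel-edge subdivision to stay loopless); and lift a compatible tour of the smaller graph back through $v$. The small pieces of bookkeeping you do carry out are correct: the base case (cycle, $S\equiv\emptyset$) is fine; the lifted tour's new transitions at $v$ are exactly the pairs of $M$ and at the other endpoints are exactly the inherited ones, so compatibility is preserved once $M$ is chosen compatibly; and the degree count in the ``compatibility graph'' on $E_v$ --- each $e$ loses at most $|S_v(e)|\le d(v)/2-2$ of its $d(v)-1$ potential partners, so it retains at least $d(v)/2+1$ --- is right.

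But there is a genuine gap, and you name it yourself: the entire proof rests on a splitting lemma asserting that some perfect matching of $E_v$ is \emph{simultaneously} $\mathcal{S}$-compatible at $v$ and connectivity-preserving, and you never prove it. Observing that the compatibility graph sits ``one above Dirac'' only yields a compatible perfect matching (via a Hamilton cycle in the compatibility graph); it says nothing yet about the partition of $E_v$ induced by the components of $G\setminus v$, which is what governs connectivity. Your proposed repair --- start from a connectivity-preserving splitting and remove forbidden pairs by alternating-cycle swaps --- is a plan, not an argument: each swap can reintroduce a connectivity violation, and you acknowledge that the two constraints ``genuinely have to be interleaved,'' which is precisely the non-trivial content of Jackson's result. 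As written, the proposal reduces the proposition to a lemma at least as hard as the proposition itself, so it is not a proof. If you want to complete it, the splitting lemma has to be established carefully, using the surplus over $d(v)/2$ to argue that among the compatible perfect matchings there is one whose induced multigraph on the components of $G\setminus v$ is connected; that is where the hypothesis $|S_v(e)|\le d(v)/2-2$ (rather than $\le d(v)/2-1$) will actually be spent.
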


\section{Proof of Theorem \ref{theorem: main}} 
\label{section:main}

The main idea of the proof of Theorem \ref{theorem: main} is as follows. We first partition the original graph $G$ into 4-edge-connected eulerian ``induced subgraphs''; these subgraphs are structurally linked by a big cactus. We then apply Theorem \ref{theorem: 4euler multi} to obtain a well-behaved eulerian tour of each subgraph, and finally connect these tours by the cactus to get an eulerian tour of $G$. 

Given a multigraph $G=(V,E)$, to \emph{contract} a set of vertices $X\subset V$, we remove all edges inside $X$, and then merge the vertices of $X$ to a new vertex $x$, where the edges incident with $x$ each corresponds to an edge incident with some $v\in X$. Note that if the sum of degrees of vertices of $X$ is even, then the degree of $x$ is even.

Let $G=(V,E)$ be an eulerian multigraph and $\mathcal{X}$ be a partition of $V$ into non-empty sets $X_1,X_2,...,X_k$ for some positive integer $k$. 
Let $M_\mathcal{X}$ be the loopless multigraph obtained from $G$ by contracting each $X_i$ to a new vertex $x_i$. 
Clearly, the degree of each $x_i$ of $M_\mathcal{X}$ is even. If $k\ge 2$, we have that $M_\mathcal{X}$ is connected since $G$ is connected, and hence $M_\mathcal{X}$ is eulerian.

Let us suppose for the moment that $M_\mathcal{X}$ is a cactus. 
Thus an edge $e$ of $M_\mathcal{X}$ belongs to exactly one cycle in $M_\mathcal{X}$. 
Let $e'$ be an edge of the same cycle and incident with $e$. 
We say that $\{e,e'\}$ is a \emph{pair at $x_i$}, where $x_i$ is some endpoint shared by $e$ and $e'$. 
Note that every edge belongs to exactly one pair at each of its endpoints, and hence belongs to exactly two pairs in total. 
Since each edge $e$ of $M_\mathcal{X}$ corresponds to an edge of $G$, we may use $e$ to denote both interchangeably.
For every pair $\{e,e'\}$ at some $x_i$, each edge has a unique endpoint in $X_i$, say $u$ and $u'$ respectively.
We create a new \emph{dummy edge}  $f=uu'$ \emph{associated with} the pair $\{e,e'\}$ (note that $f$ may be a loop). For every $1\le i\le k$, let $F_i$ be the edge set of $G[X_i]$ and $\FF_i$ be the set of all dummy edges on $X_i$, and let $G_i=(X_i,F_i\cup\FF_i)$. We say the multigraphs $G_1,...,G_k$ are \emph{inherited from} $\mathcal{X}$. Clearly, $d_{G_i}(v)=d_{G}(v)$ for every $v\in X_i$. 
The following lemma asserts that there is a partition such that inherited multigraphs are 4-edge-connected and eulerian, which are essential conditions to employ Theorem \ref{theorem: 4euler multi}.  For the sake of clarity, we do not consider edge-connectivity of multigraphs on a single vertex. 

\begin{lemma} \label{lem:separate}
Given an eulerian multigraph $G=(V,E)$, there exists a partition $\mathcal{X}$ of $V$ such that $M_\mathcal{X}$ is a cactus, and every $G_i$ inherited from $\mathcal{X}$ is either a single vertex with loops or a 4-edge-connected  eulerian multigraph. 
\end{lemma}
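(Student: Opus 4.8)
The plan is to build the partition $\mathcal{X}$ greedily by repeatedly pulling out maximal vertex sets that induce ``dense'' pieces, and to show that what remains above them, after contraction, is forced to be a cactus. Concretely, I would call a set $X\subseteq V$ with $|X|\ge 2$ \emph{good} if $G[X]$ together with the appropriate dummy edges is $4$-edge-connected, equivalently (since degrees are preserved) if for every nonempty proper $Y\subsetneq X$ the number of edges of $G$ between $Y$ and $V\setminus Y$ that have both ``sides of the contraction structure'' inside $X$ is at least $4$; the point is that $4$-edge-connectivity inside a contracted piece can be read off from edge cuts of $G$ that are ``small'' in $M_\mathcal{X}$. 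I would take $\mathcal{X}$ to be a partition into inclusion-wise maximal good sets, together with singletons for the leftover vertices, choosing among all such partitions one that, say, minimizes $|E(M_\mathcal{X})|$ or maximizes the number of parts of size $\ge 2$. Equivalently and perhaps more cleanly: start from the discrete partition and, as long as $M_\mathcal{X}$ contains an edge cut of size $\le 3$ that is not ``cactus-like'' (i.e. two parts on the two sides both of size $\ge 2$ after the cut, or a cut of size $\le 3$ separating the graph nontrivially), merge the two sides of a minimal such cut; terminate when no such move is available.

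The key steps, in order, are: (1) verify that the merging process terminates — each merge strictly decreases the number of parts, so it does; (2) show that at termination $M_\mathcal{X}$ has no edge cut of size $\le 3$ other than the ``trivial'' ones that make it a cactus — this is where one uses that a loopless eulerian multigraph has all cuts of even size, so a cut of size $\le 3$ is a cut of size $2$, and an eulerian multigraph in which every $2$-edge-cut is ``local'' (isolates structure that has already been absorbed) is exactly a cactus; (3) conclude $M_\mathcal{X}$ is a cactus, hence the dummy edges and the pairs $\{e,e'\}$ are well-defined, and $d_{G_i}(v)=d_G(v)$ so each non-singleton $G_i$ is eulerian; (4) show each non-singleton $G_i$ is $4$-edge-connected: if some $G_i$ had a cut $(Y, X_i\setminus Y)$ of size $\le 3$ — again necessarily size $2$ by parity — then, tracing the dummy edges back through $M_\mathcal{X}$, this lifts to a $2$-edge-cut of $G$ that would have been eligible for a merge step, contradicting maximality/termination. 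Step (4) is really the converse bookkeeping to step (2): a small cut inside a piece and a small cut of the contracted graph are the same phenomenon viewed from opposite sides, so the termination condition kills both simultaneously.

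The main obstacle I anticipate is the correspondence in step (4) between edge-cuts of $G_i=(X_i, F_i\cup\FF_i)$ and edge-cuts of $G$: a dummy edge $f=uu'$ associated with a pair $\{e,e'\}$ at $x_i$ encodes \emph{two} real edges $e,e'$ of $G$ leaving $X_i$, and a $2$-edge-cut of $G_i$ that uses one dummy edge and one real edge of $F_i$, or two dummy edges, must be translated carefully into a genuine small cut of $G$ separating $X_i$ appropriately — here Proposition \ref{prop:thin} (at most two edge-disjoint paths between two vertices of a cactus) is presumably the tool that controls how the cycle through $e,e'$ in the cactus $M_\mathcal{X}$ routes around, ensuring the lift has the right size. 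A secondary subtlety is handling loops and parallel dummy edges (the lemma explicitly allows $G_i$ to be ``a single vertex with loops''), so the argument must be phrased in terms of cuts rather than simple-graph connectivity throughout, and must check that shrinking never creates a piece that is a single vertex \emph{without} loops masquerading as something to be further merged. Once the cut-correspondence is set up precisely, steps (1)–(3) are short, and the whole proof reduces to the clean slogan: \emph{take $\mathcal{X}$ so that the contraction has no avoidable small cut; evenness forces small cuts to have size two; no avoidable $2$-cut upstairs means cactus; no $2$-cut downstairs means $4$-edge-connected pieces.}
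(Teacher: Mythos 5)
Your proposal takes a genuinely different route from the paper, but it has a real gap. The paper inducts on $|V|$: it picks a $2$-edge-cut $\{u_1v_1,u_2v_2\}$ whose side $X_1$ has \emph{minimum} cardinality among all $2$-edge-cuts, and observes that this minimality directly forbids any $2$-edge-cut inside $G_1$ (any such cut lifts to a smaller $2$-cut of $G$, either as-is, or by swapping the dummy edge $f$ for one of $u_1v_1, u_2v_2$). It then recurses on the remaining eulerian multigraph $G'$, and since the new vertex $x_1$ has degree $2$ in $M_\mathcal{X}$, attaching it preserves the cactus property. The cut-lifting you flag as the main obstacle is kept trivial by the induction: only one dummy edge is in play at each step.

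The sharpest problem in your scheme is that the merging rule overshoots and the stopping criterion does not characterize cacti. A cactus can have $2$-edge-cuts with both sides of size $\ge 2$ (cut a cycle of length $\ge 4$ at two edges), so ``no $\le 3$-cut with both sides $\ge 2$'' is not the right test; the correct characterization, used via Proposition~\ref{prop:thin}, is that no two vertices are joined by three edge-disjoint paths. Worse, ``merge the two sides of a minimal such cut'' can collapse $\mathcal{X}$ to two parts in one step without producing $4$-edge-connected pieces. Concretely, take two $4$-edge-connected eulerian blocks $A,B$ linked by a long cycle $a v_1 \cdots v_k b w_1 \cdots w_m a$ of fresh degree-$2$ vertices $v_i,w_j$. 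Starting from the discrete partition, the $2$-cut $\{v_1v_2,v_3v_4\}$ has both sides of size $\ge 2$, so your rule merges and terminates with $\mathcal{X}=\{\{v_2,v_3\},\,V\setminus\{v_2,v_3\}\}$, whose $M_\mathcal{X}$ is a cactus (two parallel edges); but then $G_1$ consists of two parallel edges on $\{v_2,v_3\}$, and the other piece contains $v_1$ of degree $2$, so neither is $4$-edge-connected. The correct $\mathcal{X}$ keeps the $v_i,w_j$ as singletons and takes $A,B$ as the only nontrivial parts, and nothing in your stopping rule privileges this choice. (Your alternative ``maximal good set'' formulation is circular as written, since goodness is defined through dummy edges that themselves depend on the partition, and the tie-breaker ``minimize $|E(M_\mathcal{X})|$'' would in fact prefer the bad partition above.) The device your proposal is missing, and the heart of the paper's argument, is to peel off the \emph{minimum-size} side of a $2$-cut and recurse, rather than merging across cuts: minimality of the peeled side is exactly what forces it to be already $4$-edge-connected.
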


\begin{proof}
The proof is by induction on $|V|$. For the case $|V|=2$, let $V=\{u,v\}$. If $G$ has only two edges between $u$ and $v$, then $\mathcal{X}=\{\{u\},\{v\}\}$; otherwise, $\mathcal{X}=\{\{u,v\}\}$. The lemma holds true for $|V|=2$.

For the case $|V|>2$, if $G$ is 4-edge-connected, then $\mathcal{X}=\{V(G)\}$. 
Otherwise, $G$ contains an edge-cut of size 2, i.e. an edge-cut consisting of two edges. 
Consider an edge-cut partitioning $V$ into $X_1$ and $V'$ such that $|X_1|$ is minimum among all possible edge-cuts of size 2. 
Let call two edges of the cut $u_1v_1$ and $u_2v_2$, where $u_1,u_2\in X_1$ and $v_1,v_2\in{V'}$.  
We create two dummy edges $f=u_1u_2$ and $f'=v_1v_2$.
Let $\FF_1=\{f\}$, and $F_1=E(G[X_1])$.
Let $G_1=(X_1,F_1\cup\FF_1)$ and $G'=(V',E(G[V'])\cup\{f'\})$. 
There are at least two edge-disjoint paths in $G$ between any two distinct vertices of $X_1$. If both paths contains vertices of $V'$, then the edge-cut must has size at least 4, a contradiction. Therefore there is a path in $G[X_1]$ between any two distinct vertices of $X_1$.
Thus if $|X_1|>1$ then $G_1$ is connected, and hence is eulerian since the degree of every vertex of $G_1$ is even. Similarly, $G'$ is eulerian.

Suppose that $G_1$ contains an edge-cut of size 2 partitioning $X_1$ into $X'_1$ and $X''_1$. 
If $u_1$ and $u_2$ are in the same partition, say $X'_1$, then that edge-cut is also an edge-cut of $G$ partitioning $V$ into $X_1''$ and $V'\cup X_1'$, which contradicts the minimality of $|X_1|$. 
If $u_1\in X_1'$ and $u_2\in X_1''$ then that edge-cut consists of $f$ and another edge, say $e$. 
Then $\{e,u_1v_1\}$ is an edge-cut of $G$ partitioning $V$ into $X_1'$ and $V'\cup X_1''$, a contradiction again. 
It follows that $G_1$ contains no edge-cut of size 2, and so is 4-edge-connected.

Applying induction hypothesis to the eulerian multigraph $G'$ gives a partition of $V'$ into $\mathcal{X'}=\{X_2,...,X_{k}\}$ such that $M_{\mathcal{X}'}$ and $G_2,...,G_k$ inherited from $\mathcal{X'}$ satisfy Lemma \ref{lem:separate}. Let $x_i \in M_{\mathcal{X}'}$ corresponds to $X_i$ for every $2\le i\le k$.
Set $\mathcal{X}=\mathcal{X'}\cup\{X_1\}$  and construct $M_{\mathcal{X}}$ as follows:
\begin{enumerate}[label=(\alph*)]
\item \label{enum1} If $v_1,v_2\in G_i$ for some $i$, then $M_{\mathcal{X}}$ is obtained from $M_{\mathcal{X}'}$ by adding $x_1$ and two parallel edges $x_1x_i$, corresponding to edges $u_1v_1$ and $u_2v_2$ of $G$. 
Hence there is only one pair at $x_1$: $ \{u_1v_1,u_2v_2\}$, and $f$ is its associated dummy edge.
There is one more pair at $x_i$ in $M_{\mathcal{X}}$ comparing with $x_i$ in $M_{\mathcal{X}'}$: $ \{v_1u_1,v_2u_2\}$, and $f'$ is its associated dummy edge.

\item \label{enum2} Otherwise, $v_1\in G_i$ and $v_2\in G_j$ for some $i\ne j$. There must be an edge $x_ix_j$ in $M_{\mathcal{X}'}$ corresponding to  $f'$ in $G'$. 
We obtain $M_{\mathcal{X}}$ from $M_{\mathcal{X}'}$ by adding vertex $x_1$, edge $x_1x_i$ corresponding to $u_1v_1$ and edge $x_1x_j$ corresponding to $u_2v_2$ together with deleting the edge $x_ix_j$ corresponding to $f'$. 
There is only one pair at $x_1$: $ \{u_1v_1,u_2v_2\}$, and $f$ is its associated dummy edge. 
The set of pairs at $x_i$ (res. $x_j$) of $M_{\mathcal{X}}$ are identical to the set of pairs at $x_i$ (res. $x_j$) of $M_{\mathcal{X}'}$, except that $v_1u_1$  (res. $v_2u_2$) replaces $f'$ in some pair at $x_i$ (res. at $x_j)$.
\end{enumerate}

The multigraphs $G_2,...,G_k$ inherited from $\mathcal{X}$ in this construction are identical to the multigraphs $G_2,...,G_k$ inherited from $\mathcal{X'}$.
By induction hypothesis, for every $i\ge 2$, if $G_i$ has more than one vertex then it is 4-edge-connected and eulerian.
Note that $x_1$ has degree 2, and $M_{\mathcal{X}'}$ is a cactus, then so is $M_{\mathcal{X}}$. This proves the lemma.
\end{proof}

Given an eulerian tour $\EE$ of $G$ and a subset $X$ of $V$, a segment $v_1v_2...v_r$ ($r \ge 3$) of $\EE$ is an \emph{$X$-boomerang} if $v_1,v_r\in X$ and $v_2,...,v_{r-1}\notin X$.
A \emph{projection} of $\EE$ on $X$ is an eulerian tour $\EE_X$ obtained from $\EE$ by replacing every $X$-boomerang,  say $v_1v_2...v_r$, by a dummy edge (possibly a loop) between $v_1$ and $v_r$. If $\EE_X$ is a projection of $\EE$, we say $\EE$ and $\EE_X$ are \emph{compatible}. 

Let $G$ be an eulerian multigraph and $\mathcal{X}$ be a partition of $G$ together with $M_\mathcal{X}$ and inherited $G_1,...,G_k$ obtained by the algorithm in the proof of Lemma \ref{lem:separate}.   For every $i$, let $\EE_i$ be an arbitrary eulerian tour of $G_i$. 

\begin{claim}\label{lem:compatible}
There exists an eulerian tour $\EE$ of $G$ compatible with all $\EE_i$. Furthermore, for every pair $\{e,e'\}$ at some $x_i$, there is an $X_i$-boomerang of $\EE$ starting and ending by $e$ and $e'$.
\end{claim}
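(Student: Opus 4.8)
The plan is to build $\EE$ from the cactus $M_\mathcal{X}$ by repeatedly splicing in the local tours $\EE_i$. First I would fix an eulerian tour $\EE_M$ of the cactus $M_\mathcal{X}$ and observe that, because every edge of $M_\mathcal{X}$ sits on exactly one cycle, the tour $\EE_M$ naturally decomposes into its cactus cycles: each time $\EE_M$ enters a cycle $C$ at a vertex $x_i$ it must traverse all of $C$ and return to $x_i$ before leaving, since leaving $C$ at any other vertex would force a second edge-disjoint $x_i$--$x_j$ path outside $C$, contradicting Proposition \ref{prop:thin} (a cactus has no two internally disjoint paths closing a cycle through two distinct vertices). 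Consequently, at each $x_i$ the consecutive pairs of edges used by $\EE_M$ are exactly the pairs $\{e,e'\}$ at $x_i$ defined in the paper — that is, $\EE_M$ visits $x_i$ exactly once per cycle through $x_i$, and the in/out edges of that visit form one of the designated pairs. This is the structural fact that makes the dummy edges do their job.

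Next I would define $\EE$ by the following local replacement. Working inside each $X_i$ separately, I have the eulerian tour $\EE_i$ of $G_i = (X_i, F_i \cup \FF_i)$, and each dummy edge $f \in \FF_i$ is, by construction, associated with a unique pair $\{e,e'\}$ at $x_i$. Traverse $\EE_i$; whenever $\EE_i$ uses a dummy edge $f = uu'$ associated with $\{e,e'\}$ (where $e$ has endpoint $u \in X_i$ and $e'$ has endpoint $u' \in X_i$), replace the single step "$u \to u'$ via $f$" by "$u \to$ (along $e$) $\to$ (the cycle of $M_\mathcal{X}$ containing $e,e'$, i.e. the portion of $\EE_M$ between the two visits to $x_i$ on that cycle, with each vertex $x_j$ met on the way expanded recursively into the corresponding piece of $\EE_j$) $\to$ (along $e'$) $\to u'$." Since $\EE_i$ uses $f$ exactly once and each pair at $x_i$ has exactly one associated dummy edge, every cactus cycle gets inserted exactly once, every real edge of $G$ inside some $G[X_i]$ is used exactly once (it is in exactly one $\EE_i$), and every edge of $M_\mathcal{X}$ (equivalently, the corresponding real edge of $G$ between different parts) is used exactly once. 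One should check the recursion is well-founded: the insertion of cycle $C$ at $x_i$ pulls in the tours $\EE_j$ for the other vertices $x_j$ of $C$, but $C$ is inserted while processing $\EE_i$, and $C$'s other vertices are processed only from within $\EE_i$'s expansion, so following the tree structure of how cycles hang off cactus vertices (rooted, say, at $x_1$) gives a terminating procedure that traverses every edge exactly once; hence $\EE$ is an eulerian tour of $G$.

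Finally I would verify the two asserted properties. Compatibility with each $\EE_i$: the only places where $\EE$ differs from "following $\EE_i$" while inside $X_i$ are at the dummy-edge steps, where a dummy edge $f = uu'$ of $\EE_i$ has been expanded into a walk $u v_2 \dots v_{r-1} u'$ with all internal vertices outside $X_i$ (they lie in $X_j$'s with $j \neq i$, plus the cycle structure never re-enters $X_i$ because the cactus cycle through $x_i$ meets $x_i$ only at its two designated edges). Thus this walk is exactly an $X_i$-boomerang, and the projection $\EE_{X_i}$ of $\EE$ onto $X_i$ collapses it back to $f$; so $\EE_{X_i} = \EE_i$ and $\EE$ is compatible with $\EE_i$. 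The "furthermore" clause is immediate from the construction: the pair $\{e,e'\}$ at $x_i$ has associated dummy edge $f$, and the expansion of $f$ in $\EE$ is precisely an $X_i$-boomerang that begins with $e$ and ends with $e'$. The main obstacle I anticipate is bookkeeping the recursion cleanly — making precise "the portion of $\EE_M$ between the two visits to $x_i$" and arguing it is well-defined and non-overlapping across different cycles — which is exactly where Proposition \ref{prop:thin} is needed to guarantee that $\EE_M$ enters and leaves each cactus cycle as a single block at a single vertex; once that is nailed down, the edge-counting and the boomerang identification are routine.
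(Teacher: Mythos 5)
The overall strategy---splicing the local tours $\EE_i$ into the cactus structure, one dummy edge at a time---is the right idea and is in the same spirit as the paper's induction on $k$. However, there is a genuine error in your structural claim about $\EE_M$. You assert that ``at each $x_i$ the consecutive pairs of edges used by $\EE_M$ are exactly the pairs $\{e,e'\}$ at $x_i$,'' i.e.\ that the in/out edges of each visit of $\EE_M$ to $x_i$ form one of the designated cycle-pairs. This does not follow from the (true) observation that $\EE_M$ traverses each cactus cycle as a block, and in fact it is \emph{impossible}: if the transition system of an eulerian tour at every vertex coincided exactly with the cycle-pairs, then following transitions would simply reproduce each cactus cycle as its own closed sub-tour, so no single eulerian tour could have this property as soon as the cactus has more than one cycle. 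Concretely, take the figure-eight cactus of two triangles $v a b$ and $v c d$ sharing $v$: the unique (up to rotation/reflection) eulerian tour is $v,a,b,v,c,d,v$, whose transitions at $v$ are $\{bv,vc\}$ and $\{dv,va\}$, whereas the designated pairs are $\{va,bv\}$ and $\{vc,dv\}$. When $\EE_M$ leaves $x_i$ into cycle $C$ via $e$ and later returns via $e'$, the actual transitions at $x_i$ pair $e$ (respectively $e'$) with an edge of a \emph{different} cycle through $x_i$.

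Because of this, ``the portion of $\EE_M$ between the two visits to $x_i$ on that cycle'' is not in general a traversal of $C$; as a closed tour it has two complementary arcs, and when $x_i$ is not the vertex at which $\EE_M$ enters $C$, neither arc is the block-traversal of $C$ you want. For instance, with cycle $C=x_1x_2x_3x_1$ and a child cycle $C'=x_2x_4x_5x_2$, the tour $\EE_M = x_1,x_2,x_4,x_5,x_2,x_3,x_1$ enters $C$ at $x_1$; for the pair $\{x_1x_2,\,x_2x_3\}$ at $x_2$ the portion of $\EE_M$ between the uses of these two edges is $x_2,x_4,x_5,x_2$, which is the traversal of $C'$, not the boomerang through $x_1$ and $x_3$ that the construction requires. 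Your splice is therefore not well-defined as stated. The fix is to drop $\EE_M$ entirely and define the splicing directly by recursion on the (rooted) block-cut tree of the cactus: process $\EE_1$, and whenever a dummy edge for a pair $\{e,e'\}$ at $x_i$ is encountered, walk along the corresponding cycle $C$ from $x_i$, at each intermediate vertex $x_j$ inserting $\EE_j$ with the dummy edge for the $C$-pair at $x_j$ deleted, and recursing on the remaining dummy edges of $\EE_j$. This is exactly what the paper's proof does, but phrased as an induction on $k$ that peels off one leaf set $X_1$ at a time, replacing the dummy edge $f'$ of $\EE'$ by $v_1u_1W_1u_2v_2$ where $W_1=\EE_1\setminus\{f\}$. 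Once you abandon the false ``transitions are pairs'' claim and make the recursion explicit, your argument and the paper's coincide.
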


\begin{proof}
We reuse all notations in the proof of Lemma \ref{lem:separate} and proceed by induction on $k$. 
The claim clearly holds true for $k=1$. 
For $k>1$, recall that by the algorithm in the proof of Lemma \ref{lem:separate}, the eulerian multigraph  $G'$ has $k-1$ inherited multigraphs  identical to $G_2,...,G_k$ of $G$. 
Hence applying induction hypothesis of Claim \ref{lem:compatible} to $G'$ results in an eulerian tour $\EE'$ of $G'$ compatible with all $\EE_i, i\ge 2$, and for every pair $\{e,e'\}$ at some $x_i,i\ge 2$, there is an $X_i$-boomerang of $\EE'$ starting and ending by $e$ and $e'$. 
Note that in both cases \ref{enum1} and \ref{enum2}, the only pair at $x_1$ is $\{u_1v_1,u_2v_2\}$ associated with $f$. 
Let $W_1$ be the walk obtained from $\EE_1$ by removing $f$, and $\EE$ be the eulerian tour on $G$ obtained from $\EE'$ by replacing $f'$ by the segment $v_1u_1W_1u_2v_2$.
It is straightforward that, in both cases \ref{enum1} and \ref{enum2}, the tour $\EE$ satisfies Claim \ref{lem:compatible}.
\end{proof}

Let $\{e,e'\}$ be a pair at some $x_i$, and $W$ be the $X_i$-boomerang of $\EE$ starting and ending by $e$ and $e'$. Let $\overline{W}$ be the segment obtained from $\EE$ by removing $W$.

\begin{claim}\label{lem:farewell}
If $W$ visits a vertex $v\notin X_i$, then $\overline{W}$ does not visit $v$.
\end{claim}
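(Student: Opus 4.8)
The plan is to exploit the cactus structure of $M_\mathcal{X}$ to produce a size-$2$ edge-cut of $G$ that traps the boomerang $W$ on one side, so that the complementary walk $\overline{W}$ can never reach the vertices $W$ visits outside $X_i$.

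First I would isolate the relevant part of $G$. Since $M_\mathcal{X}$ is a cactus and $\{e,e'\}$ is a pair at $x_i$, the edges $e,e'$ lie on a unique cycle $C$ of $M_\mathcal{X}$ through $x_i$. Deleting $x_i$, let $K$ be the component of $M_\mathcal{X}\setminus x_i$ containing the path $C-x_i$, and put $Y=\bigcup_{x_m\in K}X_m\subseteq V(G)$; note $X_i\cap Y=\emptyset$. Recalling that the edges of $M_\mathcal{X}$ correspond to the edges of $G$ joining distinct parts, the edges of $G$ with exactly one endpoint in $Y$ are precisely the edges of $M_\mathcal{X}$ leaving $K$; and since $K$ is a whole component of $M_\mathcal{X}\setminus x_i$, all of these go to $x_i$.

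The crux is to show that the only edges of $M_\mathcal{X}$ at $x_i$ entering $K$ are $e$ and $e'$, so that $\{e,e'\}$ is in fact the entire edge-cut of $G$ separating $Y$ from $V\setminus Y\supseteq X_i$. Here I use the defining property of a cactus: each edge lies in at most one cycle, equivalently two distinct cycles meet in at most one vertex. If some further edge $g$ at $x_i$ entered $K$, then $g$ would lie on a cycle $C'\neq C$ through $x_i$ with $C'-x_i\subseteq K$, and since $C$ and $C'$ meet only at $x_i$, a shortest path in $K$ from $C-x_i$ to $C'-x_i$ together with an arc of $C$ from $x_i$ and an arc of $C'$ back to $x_i$ would form a cycle sharing an edge with $C$ — a contradiction. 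This structural step is the main obstacle; the rest is bookkeeping.

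Finally I would trace the tour. Writing $W=v_1\cdots v_r$, the first step $v_1v_2=e$ forces $v_2\in Y$ and the last step $v_{r-1}v_r=e'$ forces $v_{r-1}\in Y$; since the cut $\{e,e'\}$ is consumed by these two steps, $W$ cannot cross out of $Y$ in between, so $v_2,\dots,v_{r-1}\in Y$. Thus if $W$ visits some $v\notin X_i$, then $v$ is an intermediate vertex of $W$ and hence $v\in Y$. On the other hand $\overline{W}$ is a walk whose endpoints $v_r,v_1$ lie in $X_i\subseteq V\setminus Y$, and it uses neither $e$ nor $e'$ (both belong to $W$); a walk that starts and ends outside $Y$ and uses no edge of the cut $\{e,e'\}$ visits no vertex of $Y$. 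In particular $\overline{W}$ does not visit $v$, which is exactly the statement of the claim.
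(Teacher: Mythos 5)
Your proof is correct, but it takes a genuinely different route from the paper's. The paper's argument is a three-line contradiction via Proposition~\ref{prop:thin}: if $W$ and $\overline{W}$ both visited a vertex $v\in X_j$, then contracting each part to a single vertex turns $W$ and $\overline{W}$ into two edge-disjoint closed walks at $x_i$ in $M_{\mathcal{X}}$, each passing through $x_j$, and splicing them yields three edge-disjoint $x_i$--$x_j$ walks (hence paths) in a cactus, which is impossible. You bypass Proposition~\ref{prop:thin} entirely and instead build the separating $2$-edge-cut $\{e,e'\}$ of $G$ explicitly, arguing from the block structure of a cactus (two distinct cycles share at most one vertex) that no third edge of $M_{\mathcal{X}}$ at $x_i$ can enter the component $K$. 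Your approach is longer but proves something sharper than the claim itself: the boomerang $W$ is trapped inside $Y=\bigcup_{x_m\in K}X_m$ while $\overline{W}$ is trapped outside, so the two walks live on complementary sides of a concrete $2$-cut rather than merely avoiding each other outside $X_i$. One step you should flag explicitly: you need every edge of $M_{\mathcal{X}}$ at $x_i$, in particular the hypothetical extra edge $g$, to lie on some cycle $C'$, which requires $M_{\mathcal{X}}$ to be bridgeless; this follows from $M_{\mathcal{X}}$ being eulerian (the paper notes ``if a cactus is eulerian then every edge belongs to exactly one cycle''), a fact your argument invokes silently.
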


\begin{proof}
Suppose that the claim was false. Let $v\in X_j$ for some $j\ne i$. Contracting every $X_i$ to $x_i$ naturally yields from $W$ and $\overline{W}$ two edge-disjoint walks $W_{\mathcal{X}}$ and $\overline{W}_{\mathcal{X}}$ in $M_{\mathcal{X}}$, respectively.
By following $W_{\mathcal{X}}$ from $x_i$ to $x_j$ and return to $x_i$, and then following $\overline{W}_{\mathcal{X}}$ to $x_j$, we obtain three edge-disjoint walks between $x_i$ and $x_j$, contrary to Proposition \ref{prop:thin}.
\end{proof}

\begin{claim}\label{lem:refarewell}
If $G$ has minimum degree $d$, then whenever $\EE$ leaves $X_i$, it takes at least $d$ steps to return to $X_i$. 
\end{claim}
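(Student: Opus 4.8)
The statement to prove is Claim~\ref{lem:refarewell}: if $G$ has minimum degree $d$, then whenever $\EE$ leaves $X_i$ it takes at least $d$ steps to return to $X_i$. The plan is to combine Claim~\ref{lem:farewell} with the degree hypothesis on $G$. Fix an $X_i$-boomerang $W = v_1 v_2 \dots v_r$ of $\EE$ (so $v_1, v_r \in X_i$ and $v_2, \dots, v_{r-1} \notin X_i$), and write $\overline W$ for the complementary segment of $\EE$. I want to show $r - 1 \ge d$, i.e. that the boomerang has length at least $d$.

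First I would observe that every vertex $v_2, \dots, v_{r-1}$ lies outside $X_i$, hence lies in some $X_j$ with $j \ne i$. By Claim~\ref{lem:farewell}, none of these vertices is visited by $\overline W$. Since $\EE = W \overline W$ is an eulerian tour of $G$, every edge of $G$ incident with such a vertex $v_s$ ($2 \le s \le r-1$) must appear in $W$; equivalently, all $d_G(v_s) \ge d$ edges at $v_s$ are traversed within the segment $W$. The key step is then a counting argument on $W$ itself: consider the first interior vertex $v_2$. Its $d_G(v_2)$ incident edges all occur among the edges of $W$, and since $\EE$ is eulerian each such edge is traversed exactly once; every visit of $W$ to $v_2$ uses exactly two of these edges (one entering, one leaving), except possibly the structure at the ends — but $v_2$ is interior, so every occurrence of $v_2$ in $W$ is flanked by two edges of $W$. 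Hence $W$ must be long enough to accommodate all $d_G(v_2)$ edges at $v_2$; more precisely the edges $v_1v_2$ and $v_2v_3$ are two distinct edges of $W$ at $v_2$, and the remaining $d_G(v_2) - 2$ edges at $v_2$ force at least $(d_G(v_2)-2)/2$ further visits of $W$ to $v_2$, each costing two more steps. Tallying steps along $W$ between consecutive visits to $v_2$ gives $r - 1 \ge d_G(v_2) \ge d$. (If one prefers to avoid the multiplicities-at-a-single-vertex bookkeeping, an alternative is: the vertex set $\{v_2, \dots, v_{r-1}\}$ is nonempty and each of its vertices contributes $\ge d$ edge-endpoints all living inside $W$, so $W$ has at least $d/2$ edges, already giving a bound linear in $d$; but the cleaner bound $r-1\ge d$ comes from focusing on a single interior vertex as above.)

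The main obstacle I anticipate is making the single-vertex counting fully rigorous in the presence of multiedges and the fact that $W$ is a walk, not a path — one has to be careful that "every edge incident with $v_s$ lies in $W$" really does follow from Claim~\ref{lem:farewell} (it does: $\overline W$ together with $W$ exhausts $E(G)$, and $\overline W$ avoids $v_s$ entirely, so $\overline W$ contains no edge at $v_s$), and that counting edges at $v_2$ inside $W$ translates correctly into a lower bound on the number of steps of $W$. Once that is set up, the inequality $r-1 \ge d_G(v_2) \ge d$ is immediate, which is exactly the assertion that $\EE$ needs at least $d$ steps outside $X_i$ before returning.
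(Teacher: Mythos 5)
Your proposal is correct and takes essentially the same route as the paper: fix an interior vertex $v$ of the boomerang $W$, invoke Claim~\ref{lem:farewell} to conclude that $\overline W$ avoids $v$ so every edge of $G$ incident with $v$ lies in $W$, and conclude that $W$ has at least $d_G(v)\ge d$ edges. The paper dispenses with the visit-by-visit bookkeeping at $v_2$ (which, while correct, is unnecessary): once you know $W$ contains all $d_G(v)$ edges incident with $v$ and no edge is repeated in $W$, the bound on the length of $W$ is immediate.
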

\begin{proof}
The claim is equivalent to that every $X_i$-boomerang $W$ has length at least $d$. Suppose that $W$ visits vertex $v\notin X_i$. 
By Claim \ref{lem:farewell}, $W$ must contains all edges incident with $v$, 
and hence has length at least $d$.
\end{proof}

  We are ready to prove the main theorem.

\begin{proof}[Proof of Theorem \ref{theorem: main}] 
Let $G$ be an eulerian graph with minimum degree at least $d_\ell$, the constant of Theorem \ref{theorem: 4euler multi}.
There is a partition $\mathcal{X}=\{X_1,...,X_k\}$ of $V(G)$ together with inherited multigraphs $G_1,...,G_k$ satisfying Lemma \ref{lem:separate}.

If $G_i$ consists of only one vertex and some loops, let $\EE_i$ be an arbitrary eulerian tour of $G_i$. Otherwise, Lemma \ref{lem:separate} asserts that $G_i$ is eulerian, 4-edge-connected, and $d_{G_i}(v)=d_G(v)\ge d_\ell$ for any $v\in X_i$.
Also note that $G[X_i]$ is a simple subgraph of $G_i$.
We thus get, by Theorem \ref{theorem: 4euler multi}, an eulerian tour $\EE_i$ of $G_i$ of which every segment of length at most $\ell$ and containing only edges of $G[X_i]$ is a path. 
Claim \ref{lem:compatible} gives an eulerian tour $\EE$ of $G$ compatible with all $\EE_i$. 

The proof is completed by showing that every segment $W$ of length at most $\ell$ of $\EE$ is a path. Suppose that $W=W_1e_1W_2e_2...e_{t-1}W_t$, where each $W_s$ (possibly of length 0) contains only vertices of some $X_{i_s}$, and $e_s$ is an edge between two distinct sets $X_{i_s}$ and $X_{i_{s+1}}$. 
By Claim \ref{lem:refarewell}, whenever $\EE$ leaves some $X_{i_s}$, it takes at least $d_\ell>\ell$ steps to return to $X_{i_s}$, while the length of $W$ is at most $\ell$. Therefore $X_{i_s}\ne X_{i_{r}}$ for every $s\ne r$. Because $\EE$ is compatible with $\EE_{i_s}$, and $W_s$ contains only vertices of $X_{i_s}$, we have that $W_{s}$ is a segment of $\EE_{i_s}$.
Since $W_s\subseteq G[X_{i_s}]$ and has length at most $\ell$, it is a path by Theorem \ref{theorem: 4euler multi}.
This means that $W$ is a path, and the proof is complete.
\end{proof}


\section{Proof of Theorem \ref{theorem: 4euler multi}} 
\label{section:relaxation}

\subsection{Path-collections} \label{section:fractions}

We first recall some notions and results in \cite{BHLT15+}.
Let $G=(V,E)$ be a loopless multigraph. A {\em path-collection} $\PP$ \textit{on} $G$ is a set of edge-disjoint paths of $G$.
We denote by $U_\PP=(V,E')$ the multigraph where $E'$ is the set of edges of paths in ${\cal P}$. 
If $U_\PP=G$ then $\PP$ is said to be a \textit{path-decomposition} of $G$.
For convenience, from now on, we say \emph{collection} for path-collection and \emph{decomposition} for path-decomposition.

Let us denote by $H_\PP=(V,E'')$ the multigraph where each edge $uv\in E''$ corresponds to a path between $u$ and $v$ in $\PP$ (if $\PP$ contains several paths from $u$ to $v$, we have as many 
edges $uv\in E''$). 
The {\em degree} 
of a vertex $v$ in $\PP$, denoted $d_\PP(v)$, is the degree (with multiplicity) of $v$ in $H_\PP$, which is also the number of paths in $\PP$ with endpoint $v$.

Two edge-disjoint paths of $G$ sharing an endpoint $v$ are \textit{conflicting} if they also intersect at some vertex different from $v$.
Equivalently, we say that two paths of $\PP$ issued from the same vertex are \emph{conflicting}
if the corresponding paths in $U_\PP$ are conflicting.
In general, the paths of a collection can pairwise intersect, and hence we would like to measure
how much.
For every vertex $v\in V$, let $\PP(v)$ be the set of paths in $\mathcal{P}$ containing $v$ as an endpoint. 
	The {\em conflict ratio} of $v$ is $$\mbox {conf}_{\PP}(v):=\frac{\max_{w\ne v} \big| \{P\in \PP(v) : w \in P\}\big|}{d_\PP(v)}.$$
We denote the \emph{conflict ratio} of $\PP$ by  ${\rm conf}(\PP):=\max_{v}\mbox {conf}_{\PP}(v).$
We always have ${\rm conf}(\PP) \leq 1$ since $|\PP(v)|=d_\PP(v)$. 

Suppose that we have a decomposition $\PP$ of an eulerian graph $G$ with all paths of length at least $\ell$. Then just by concatenating the paths arbitrarily, we obtain a decomposition of $G$ into several circuits since $G$ is eulerian. If every two consecutive paths (i.e., they are concatenated) are non-conflicting, then all circuits 
are  $\ell$-step self-avoiding. Theorem \ref{ll1} provides a low conflicting decomposition for this purpose.
In order to obtain an $\ell$-step self-avoiding eulerian tour, it is necessary that the process of concatenating returns a single circuit; this is taken care by Lemma \ref{lemma:13-tree}.

\begin{theorem}[\cite{BHLT15+}, Theorem 3.4] \label{ll1}
Let $\ell$ be a positive integer, and $\varepsilon>0$ sufficiently small. 
There is an integer $L_{\ell,\varepsilon}$ such that for every graph $G$ with minimum degree at least $L_{\ell,\varepsilon}$, 
there is a decomposition $\PP$ of $G$ satisfying:
\begin{itemize}
\item The length of every path of $\PP$ is either $\ell$ or $\ell+1$.
	\item ${\rm conf}(\PP) \leq 1/4(\ell + 10)$.
	\item $(1-\varepsilon) d_G(v) \leq   \ell d_{\PP}(v)
	\leq (1+\varepsilon) d_G(v)$ for every vertex $v$.
\end{itemize}
\end{theorem}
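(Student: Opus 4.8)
The plan is to view a decomposition of the required type as a near-perfect matching in an auxiliary hypergraph and to build it by a semi-random (nibble) argument, mopping up a sparse remainder at the end. Throughout I may assume $\delta(G)$ is as large as convenient and that $G$ is connected (components are handled separately), and I record the two elementary facts showing there is no global obstruction: in any path-decomposition one automatically has $d_\PP(v)\equiv d_G(v)\pmod{2}$, and since path lengths are allowed to be $\ell$ or $\ell+1$ and $|E(G)|$ is large, the total number of edges is never an obstacle either. Let $\cH$ be the hypergraph with ground set $E(G)$ whose hyperedges are the edge sets of the length-$\ell$ paths of $G$ (length-$(\ell+1)$ paths are kept in reserve). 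A perfect matching of $\cH$ is exactly a decomposition of $G$ into paths of length $\ell$; I will settle for an almost perfect one plus a correction. The parameters of $\cH$ that I need follow from $\delta(G)$ being huge and $\ell$ fixed: $\cH$ is (after a routine regularisation of $G$) nearly regular of degree $\Theta_\ell(\delta(G)^{\ell-1})$, since each edge of $G$ extends to that many length-$\ell$ paths, and --- this is exactly what lies behind the conflict bound --- for any two edges $e,e'$ of $G$ with no common endpoint, the number of length-$\ell$ paths containing both is smaller by a factor $\Theta_\ell(1/\delta(G))$.

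The core step is to run the R\"odl nibble (equivalently, the Pippenger--Spencer random-greedy process) on $\cH$: repeatedly pick a sparse random family of pairwise edge-disjoint length-$\ell$ paths of the current graph and move it into $\PP$. Two properties of the output are exactly what the theorem demands. First, \emph{fairness}: since in each round paths are seeded uniformly from the remaining edges, every vertex $v$ acquires path-endpoints at a rate proportional to the rate at which its incident edges are consumed, so that at the end $\ell\, d_\PP(v)=(1\pm o(1))\,d_G(v)$ for every $v$; tuning the seeding probability and the number of rounds upgrades this to $(1\pm\varepsilon)$ once $\delta(G)\ge L_{\ell,\varepsilon}$. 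Second, \emph{pseudorandomness}: locally the matching produced behaves like a uniformly random maximal one, so by the codegree estimate above, for every $v$ and every $w\neq v$ the expected number of paths of $\PP$ with endpoint $v$ that pass through $w$ is at most $\Theta_\ell(1/\delta(G))\cdot d_\PP(v)$; as the exposure martingale has bounded influences, Azuma/Talagrand concentration makes this bound hold for all pairs $(v,w)$ simultaneously with room to spare, giving ${\rm conf}(\PP)\le\tfrac{1}{8(\ell+10)}$, say. The nibble leaves a leftover graph $G'=G\setminus U_\PP$ whose maximum degree can be driven below $\varepsilon\,\delta(G)/2$.

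It remains to fold $E(G')$ into $\PP$. Since $\Delta(G')$ is tiny compared with $\delta(G)$, each edge $ab\in E(G')$ is absorbed by lengthening a single path from $\ell$ to $\ell+1$: pick $P\in\PP$ with endpoint $a$ that avoids the vertex $b$ --- such a $P$ exists because $d_\PP(a)=\Theta(d_G(a)/\ell)$ dwarfs the number of $G'$-edges at $a$ already handled, and by the conflict bound at most a $\tfrac{1}{8(\ell+10)}$-fraction of the paths at $a$ contain $b$ --- and replace $P$ by $P+ab$. Processing $E(G')$ greedily this way perturbs each $d_\PP(v)$ by only $O(\Delta(G'))=O(\varepsilon\,\delta(G))$, and since every absorption just appends one already-nearby vertex to one path, it raises each conflict ratio by at most $O(\varepsilon)$; taking $\varepsilon$ slightly below the target in the nibble phase makes all three conclusions of the theorem hold, with every path of length $\ell$ or $\ell+1$.

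I expect the main obstacle to be the simultaneous control, during the random phase, of the rigid length window and the spread: insisting that each consumed bundle of edges be a genuine length-$\ell$ \emph{path} (with no repeated vertex) makes the nibble extensions fail with positive probability, and one must show these failures are rare and uniformly spread, so that the leftover $G'$ really is of low degree everywhere --- otherwise both the fairness estimate and the absorption step fall apart. A secondary, purely quantitative, point is that the conflict bound has to be established with a definite multiplicative slack (above, $\tfrac{1}{8(\ell+10)}$ rather than $\tfrac{1}{4(\ell+10)}$ on the nose), because the absorption phase inevitably erodes it.
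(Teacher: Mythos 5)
Note first that Theorem~\ref{ll1} is imported verbatim from \cite{BHLT15+} and is not proved in this paper, so there is no in-paper argument to compare your attempt against; I can only assess the proposal on its own terms.

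Your nibble plan is the natural line of attack and several of the heuristic estimates are in the right ballpark, but as written it leaves gaps that are not merely cosmetic. The most serious is the ``routine regularisation'' you invoke in passing: the hypergraph $\cH$ on $E(G)$ whose hyperedges are length-$\ell$ paths is nowhere near regular when $G$ merely has large minimum degree --- an edge incident to a vertex of degree $K\delta$ lies in roughly $K^{\ell-1}$ times as many hyperedges as one in a low-degree region --- and the theorem's conclusion $(1-\varepsilon)d_G(v)\le\ell d_\PP(v)\le(1+\varepsilon)d_G(v)$ is a \emph{per-vertex} demand, so you cannot wave away the high-degree part. You would need to explain how $G$ is split into near-regular layers (de Werra's theorem, Proposition~\ref{werra} here, is the natural tool) and then show that combining the per-layer decompositions does not wreck the conflict bound at vertices with very uneven degree across layers. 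Closely related: Pippenger--Spencer yields a leftover that is small \emph{in total size}, not of small \emph{maximum degree}, and the fairness bound is again per-vertex; both require a local concentration argument run alongside the nibble (or an appeal to a stronger black box such as Kahn's theorem with a martingale/Talagrand estimate tracking each vertex's remaining degree), which your sketch gestures at but does not set up.

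Two further points. In the absorption phase you only arrange that the path $P$ you lengthen by $ab$ avoids $b$; you must also verify that the \emph{new} endpoint $b$ does not end up seeing a single vertex $w$ on too many of its newly acquired paths, and you must ensure no path is lengthened twice (which would produce length $\ell+2$, violating the first bullet). These are fixable with a per-vertex budget argument, but the claim that the absorption ``raises each conflict ratio by at most $O(\varepsilon)$'' needs to be argued from both ends of the extended path, not just from $a$. Finally, the obstacle you flag as the main worry --- that nibble extensions might fail to produce genuine paths with no repeated vertex --- is in fact a non-issue in the hypergraph formulation you chose: the hyperedges of $\cH$ \emph{are} by definition edge-sets of honest length-$\ell$ paths, so self-intersection is built out before the nibble ever runs; the real difficulties lie in the regularity and local-control issues above.
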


\begin{lemma}[\cite{BHLT15+}, Lemma 4.1] \label{lemma:13-tree}
Every 2-edge-connected loopless multigraph $G$
has a collection $\PP$ such that the length of every path in $\PP$ is either $1$ or $2$, and $H_\PP$ is a subcubic tree spanning $V(G)$.
\end{lemma}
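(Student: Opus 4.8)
The plan is to prove the lemma by induction via an ear decomposition of $G$. Recall that every $2$-edge-connected loopless multigraph can be written as $G = C_0 \cup P_1 \cup \dots \cup P_k$, where $C_0$ is a cycle and each $P_i$ is an \emph{ear}: a path or cycle whose internal vertices do not occur in $G_{i-1} := C_0 \cup P_1 \cup \dots \cup P_{i-1}$ and whose endpoint(s) lie in $G_{i-1}$ (for multigraphs an ear may be a single edge between two vertices of $G_{i-1}$, or a cycle/digon attached at a single vertex). I will build, for each $i$, a collection $\PP_i$ on $G_i := C_0 \cup \dots \cup P_i$ all of whose paths have length $1$ or $2$ and for which $H_{\PP_i}$ is a subcubic tree spanning $V(G_i)$; the case $i = k$ gives the lemma. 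Equivalently, one may induct on $|E(G)|$: if some edge lies in no $2$-edge-cut then $G$ minus that edge is still $2$-edge-connected and the induction applies verbatim, while otherwise a $2$-edge-cut with a side of minimum size exhibits a vertex of degree $2$ that can be suppressed — this is the same reduction seen through the ear decomposition.

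For the base case, write $C_0 = x_0 x_1 \dots x_{n-1}x_0$ and let $\PP_0$ consist of all edges of $C_0$ but one; then $H_{\PP_0}$ is a path, hence a subcubic spanning tree of $C_0$. Note that the omitted edge may be chosen freely, so that any prescribed vertex of $C_0$ can be made to have degree $2$ in $H_{\PP_0}$ (when $n \ge 3$; the digon case is immediate).

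For the inductive step, let $P_i$ have internal vertices $w_1, \dots, w_s$ and endpoints $a, b \in G_{i-1}$, and write $H := H_{\PP_{i-1}}$. If $s = 0$ (an edge-ear) we discard $P_i$ and keep $\PP_{i-1}$, as $H$ already spans $V(G_i) = V(G_{i-1})$. If $s \ge 1$: when $d_H(a) \le 2$, add to $\PP_{i-1}$ the length-$1$ paths $a w_1, w_1 w_2, \dots, w_{s-1} w_s$; this appends a pendant path to $H$ at $a$, so $H$ stays a subcubic tree and now spans $w_1, \dots, w_s$. If $a$ is saturated ($d_H(a) = 3$) but $d_H(b) \le 2$, attach the pendant path at $b$ instead. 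The only remaining situation is $d_H(a) = d_H(b) = 3$, which splits into two subcases. If $a$ and $b$ are joined by an edge of $H$ (in particular if $a = b$, a cycle-ear), realized by some path $Q \in \PP_{i-1}$, replace $Q$ by the path $a w_1 w_2 \dots w_s b$ using all edges of the ear as length-$1$ paths: in $H$ this subdivides the edge $ab$ by the vertices $w_1, \dots, w_s$, so $H$ remains a subcubic spanning tree and — the key point — the degrees of $a$ and $b$ are unchanged. If instead $a$ and $b$ are at distance at least $2$ in $H$, along a path $a\,z_1 \dots z_{m-1}\,b$, I reroute: delete from $\PP_{i-1}$ the path realizing the edge $a z_1$ (and, if needed, the one realizing $z_{m-1} b$), which lowers $d_H(a)$ (and $d_H(b)$) to $2$ and frees the underlying edges of $G$; then re-add the pendant path of the ear at $a$ and reconnect the pieces of the now-disconnected tree using these freed edges together with the unused edge $w_s b$ of the ear.

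The edge-ears and pendant attachments are routine, and the adjacent-saturated-endpoints subcase is handled cleanly by the subdivision trick above. I expect the main obstacle to be the last subcase — reattaching an ear whose two endpoints are both saturated and non-adjacent in $H$. The difficulty is purely one of degree bookkeeping: one must check that, after opening the tree along the $a$--$b$ path and inserting the ear, the edges freed by the deleted collection-paths (whose lengths are $1$ or $2$, so whose middle vertices have to be placed carefully) together with the ear's leftover edge can always be recombined into a subcubic spanning tree. I expect this to follow from a short case analysis on those lengths and on where the middle vertices lie, invoking $2$-edge-connectivity and the fact that a vertex of degree $3$ in $H$ may still have many incident edges of $G$ available, so as to locate a legal reconnection; every other step is direct.
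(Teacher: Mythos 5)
Your overall plan — induction along an ear decomposition, maintaining a subcubic spanning tree $H_{\PP_i}$ of the partial graph $G_i$ — is a sensible one, and the easy cases (edge-ears, pendant attachment when some endpoint has $H$-degree at most $2$, and the genuine ``subdivision'' of an $H$-edge $ab$) are handled correctly. But the hard cases that you flag, and that you propose to dispatch by ``a short case analysis,'' contain a concrete error and a real gap that I do not see how to close along the lines you indicate.

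First, the cycle-ear sub-case is silently mishandled. You fold ``$a=b$, a cycle-ear'' into the sub-case ``$a$ and $b$ are joined by an edge of $H$,'' but $H_{\PP_{i-1}}$ is a tree and has no loop at $a$, nor can any path $Q\in\PP_{i-1}$ of length $1$ or $2$ in a loopless multigraph go from $a$ back to $a$. So there is nothing to subdivide: if a cycle-ear arrives at a vertex $a$ with $d_H(a)=3$, none of your moves apply. This situation really occurs for $2$-edge-connected multigraphs that are not $2$-vertex-connected (two triangles glued at one vertex already forces a closed ear), so the case cannot be defined away by a cleverer choice of ear decomposition.

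Second, the reconnection in the ``both endpoints saturated and non-adjacent in $H$'' sub-case overloads $b$. After you delete only the path realizing $a z_1$ and append the pendant $a w_1\dots w_s$ at $a$, you have $d_H(a)=3$ again and the tree is split into two pieces; reconnecting them with the unused ear edge $w_s b$ raises $d_H(b)$ to $4$. If you also delete the path realizing $z_{m-1}b$ to make room at $b$, the tree splits into three pieces, and the edges freed by those two deletions are not enough to give a legal reconnection in general: a freed edge $a z_1$ cannot be re-used as an $H$-edge incident with $a$ (this again makes $d_H(a)=4$), and a freed length-$2$ path $a m_1 z_1$ has a middle vertex $m_1$ sitting in an unknown component and with unknown $H$-degree, so routing a length-$1$ or length-$2$ path through the freed edges into the right component is not ``purely degree bookkeeping.'' Your parenthetical appeal to ``the fact that a vertex of degree $3$ in $H$ may still have many incident edges of $G$ available'' is also not available to you: $2$-edge-connectivity only guarantees minimum degree $2$, so a degree-$3$ vertex of $H$ may have no spare incident edges at all. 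As written, then, the proof has two genuine gaps — the cycle-ear-at-a-saturated-vertex case is not covered, and the non-adjacent-saturated-endpoints case needs a substantially more careful (and as yet unspecified) surgery — so the lemma is not established.
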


\subsection{$F$-path-collections}

Given a multigraph $G=(V,E)$ and a subgraph $G'=(V,F)$ satisfying the hypotheses of Theorem \ref{theorem: 4euler multi}, the goal is to find an eulerian tour 
$\EE$ of $G$ such that every segment of $\EE$ of length at most $\ell$ and consisting of only edges of $F$ is a path. To this end, we introduce a relaxation of path, called $F$-path, to depict the characteristics of segments of the tour.
Let $G=(V,E)$ be a multigraph and $F$ be a subset of $E$.
A walk $W$ in $G$ is called an \emph{$F$-path} if every subwalk of $W$ containing only edges of $F$ is a path. An $F$-path $W$ is \emph{covered} if all edges of $W$ belong to $F$, and is \emph{uncovered} otherwise.
It is immediate that a covered $F$-path is a path. 

An \emph{$F$-collection} $\PP$ on $G$ is a set of edge-disjoint $F$-paths of $G$.
We denote by $U_\PP=(V,E')$ the multigraph where $E'$ is the set of edges of $F$-paths in ${\cal P}$. 
If $U_\PP=G$, then $\PP$ is called an \textit{$F$-decomposition} of $G$.
We denote by $H_\PP=(V,E'')$ the multigraph where each edge (possibly a loop) $uv\in E''$ corresponds to an $F$-path between $u$ and $v$ in $\PP$. The {\em degree} 
of a vertex $v$ in $\PP$, denoted $d_\PP(v)$, is the degree (with multiplicity, and a loop contributes two) of $v$ in $H_\PP$.

Given an $F$-path $P=ve_1v_1...e_{t}v_{t}$, the \emph{ray of $P$ from }$v$, denoted by $P_{v|F}$, is the longest subwalk $ve_1v_1...e_{s}v_{s}$ (possibly of length 0) of $P$ such that $e_1,...,e_{s}\in F$. There are several remarks. First, every ray is a path.
Second, each $F$-path $P$ has exactly two rays; these rays are identical to $P$ if $P$ is covered, and are edge-disjoint if $P$ is uncovered. 
Third, if $P$ is a \emph{closed} (obviously uncovered) $F$-path from $v$ to $v$, then both of its rays are from $v$.
We now would like to measure the conflict between two rays.
We first agree that two rays of the same $F$-path do \emph{not conflict} each other, even if they may intersect at some vertex.
Two rays $P_{v|F}$ and $P'_{v|F}$ (with $P\ne P'$) issued from some vertex $v$ are 
 \emph{conflicting} if $P_{v|F}$ and $P'_{v|F}$ also intersect at some vertex different from $v$.
For every $v\in V$, let ${\cal P}(v)$ be the set of $F$-paths in $\mathcal{P}$ containing $v$ as an endpoint, and ${\cal P}(v|F)$ be the set of rays from $v$ of $F$-paths in $\mathcal{P}$, where a closed $F$-path with endpoint $v$ contributes two rays.
We define the \emph{conflict ratio} of $v$ in $\PP$ as
$$ \mbox {conf}_{\PP}(v|F):=\frac{\max_{w\ne v} \big| \{P_{v|F}\in {\cal P}(v|F) : w \in P_{v|F}\}\big|}{d_\PP(v)}.$$
We denote the \emph{conflict ratio} of $\PP$ by $  \mbox {conf}(\PP|F):=
\max_v\mbox {conf}_{\PP}(v|F)$. 
We always have ${\rm conf}(\PP|F) \leq 1$ since $\big|{\cal P}(v|F)\big|=d_\PP(v)$.

The the proof of Theorem \ref{theorem: 4euler multi} is similar to the proof of Theorem \ref{theorem:simple 4eulerian} but more involved.
Let us first prove an extension of Theorem \ref{ll1} to $F$-decompositions. By saying a ray of $\PP$, we mean a ray of some $F$-path of $\PP$.

\begin{lemma}\label{lemma: highmin multi}
Let $\ell$ be a positive integer, and $\varepsilon>0$ sufficiently small. 
There is an integer $L'_{\ell,\varepsilon}$ such that for every multigraph $G$ with minimum degree at least $L'_{\ell,\varepsilon}$ and every simple subgraph $(V,F)$ of $G$, 
there is an $F$-decomposition $\PP$ of $G$ satisfying:
\begin{itemize}
	\item Every ray of $\PP$ has length at most $\ell+1$.
	\item Every covered $F$-path of $\PP$ has length at least $\ell$.
	\item ${\rm conf}(\PP|F) \leq 1/4(\ell + 9)$.
	\item $(1-\varepsilon) d_G(v) \leq   \ell d_{\PP}(v)
	\leq (1+2\varepsilon) d_G(v)$ for every vertex $v$.
\end{itemize}
\end{lemma}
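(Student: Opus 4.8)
The plan is to reduce the problem to the already-known Theorem \ref{ll1} by working on a suitably modified \emph{simple} graph. The key observation is that the only obstruction to directly applying Theorem \ref{ll1} is that $G$ is a multigraph, whereas $F$ spans a simple subgraph. So first I would split $G$ as $G = G' \cup R$ where $G' = (V,F)$ is the simple part and $R = (V, E\setminus F)$ carries all the ``redundant'' edges (multiplicities, loops). Since $G$ is eulerian and — after we discard components issues via the $4$-edge-connectivity hypothesis of Theorem \ref{theorem: 4euler multi}, which gives high edge-connectivity — I can first handle $G'$: apply Theorem \ref{ll1} to $G'$ (its minimum degree is still large if we choose $L'_{\ell,\varepsilon}$ large enough compared to $L_{\ell,\varepsilon}$, noting $d_{G'}(v)$ could be somewhat smaller than $d_G(v)$, which is exactly why we need the slightly weaker degree bound $(1+2\varepsilon)$ on the right and the slightly relaxed conflict bound $1/4(\ell+9)$ versus $1/4(\ell+10)$). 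This yields a path-decomposition $\PP_0$ of $G'$ into paths of length $\ell$ or $\ell+1$, with ${\rm conf}(\PP_0) \leq 1/4(\ell+10)$ and $d_{\PP_0}$ proportional to $d_{G'}/\ell$.

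Next I would incorporate the edges of $R$. Each edge of $R$ is not in $F$, so it can be freely appended to paths of $\PP_0$ without violating the $F$-path condition, as long as we never create a subwalk lying entirely in $F$ that is not a path. The natural move: use Proposition \ref{werra} to balance-color $R$, or more simply distribute the $R$-edges at each vertex evenly among the path-ends of $\PP_0$ incident to that vertex, extending each path of $\PP_0$ by one $R$-edge at (possibly) each end, or stitching short $R$-only $F$-paths. Because an $R$-edge is always ``uncovered glue,'' attaching it at an endpoint $v$ turns a covered path into an uncovered $F$-path whose two rays are edge-disjoint; the ray on the $F$-side still has length $\ell$ or $\ell+1 \le \ell+1$, so the ``every ray has length at most $\ell+1$'' bound is preserved, and ``every covered $F$-path has length $\ge \ell$'' holds because the only covered $F$-paths are the untouched members of $\PP_0$. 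The conflict ratio must be recomputed: rays from $v$ are now indexed by ${\cal P}(v|F)$, and since each ray is a sub-path of an original path of $\PP_0$ (the $F$-portion), two rays conflict only if the underlying $\PP_0$-paths did; careful bookkeeping of $d_\PP(v)$ versus $d_{\PP_0}(v)$ — the former counts $F$-paths, and balancing keeps it within a constant factor — pushes $1/4(\ell+10)$ down to $1/4(\ell+9)$ with room to spare once $\varepsilon$ is small. Finally one checks the degree inequality: $d_G(v) = d_{G'}(v) + d_R(v)$, and since every $R$-edge at $v$ either lengthens an existing $F$-path or creates a new short uncovered $F$-path contributing to $d_\PP(v)$, we get $\ell d_\PP(v)$ trapped between $(1-\varepsilon)d_G(v)$ and $(1+2\varepsilon)d_G(v)$ provided the number of $R$-edges is not too large relative to $d_G(v)$ — and if it \emph{is} large, the extra slack on the upper side ($2\varepsilon$ rather than $\varepsilon$) absorbs it.

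The main obstacle I anticipate is the simultaneous control of the conflict ratio and the degree balance when $R$ is large: naively gluing many $R$-edges at a vertex $v$ can either blow up $d_\PP(v)$ (too many tiny $F$-paths, violating the right-hand degree bound) or concentrate many rays through a common vertex $w$ (violating the conflict bound). The resolution is to be greedy and balanced at the same time — append $R$-edges to the \emph{longest available} rays first so that ray-lengths stay within $[\ell, \ell+1]$ on the $F$-side, and spread the endpoints of any genuinely new $R$-only $F$-paths across distinct target vertices using a de Werra–type balancing (Proposition \ref{werra}) so no vertex $w$ accumulates conflicts. A secondary subtlety is loops in $G$ (allowed in a multigraph): a loop at $v$ is an $R$-edge forming a closed $F$-path contributing two rays of length $0$ from $v$, which is harmless for conflict (length-$0$ rays contain only $v$) but must be counted correctly in $d_\PP(v)$; I would handle loops by the convention already fixed in the paper (a loop contributes two to $d_\PP$) and note they never affect the conflict maximum. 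Assembling these pieces and verifying the four bullet points is then a routine, if slightly tedious, computation.
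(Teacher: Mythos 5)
Your proposal has a genuine gap at the very first step: you apply Theorem~\ref{ll1} to $G'=(V,F)$ and assert that "its minimum degree is still large if we choose $L'_{\ell,\varepsilon}$ large enough compared to $L_{\ell,\varepsilon}$." That assertion is false. The hypothesis of Lemma~\ref{lemma: highmin multi} puts a lower bound only on the minimum degree of the multigraph $G$, not of the simple subgraph $(V,F)$. A vertex $v$ may have all (or almost all) of its incident edges in $E\setminus F$ --- many parallel edges or loops --- so $d_{(V,F)}(v)$ can be arbitrarily small, even $0$. No choice of $L'_{\ell,\varepsilon}$ relative to $L_{\ell,\varepsilon}$ fixes this, and the $4$-edge-connectivity of $G$ (which in any case is not a hypothesis of this lemma, only of Lemma~\ref{lemma: 4ec multi}) gives no control over degrees inside $F$. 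Once this fails, the downstream bookkeeping (conflict ratio, degree balance) has no foundation: you cannot guarantee even a single path of $\PP_0$ at such a vertex, let alone enough to absorb the $R$-edges.

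The paper sidesteps this exact obstacle by going in the opposite direction: rather than shrink $G$ to $(V,F)$, it \emph{augments} $G$ to a simple graph. Each dummy edge $e=v_{e,1}v_{e,2}$ (an edge of $E\setminus F$) is deleted and replaced by a private clique gadget $X_e$ of about $L_{\ell,\varepsilon}$ new vertices, attached to $v_{e,1}$ and $v_{e,2}$ by single edges. This keeps $d_{G'}(v)=d_G(v)$ for every original $v$, so the augmented graph is simple with large minimum degree, and Theorem~\ref{ll1} applies. Paths of the resulting decomposition that dip into a gadget are then traced and spliced back across the corresponding dummy edge, giving $F$-paths whose rays are exactly end-segments of the original paths; leftover all-gadget circuits are collected into a few extra uncovered $F$-paths per vertex, which is what accounts for the slack $1/4(\ell+10)\to 1/4(\ell+9)$ and $(1+\varepsilon)\to(1+2\varepsilon)$. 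If you want to salvage your approach, you would need to replace "apply Theorem~\ref{ll1} to $(V,F)$" with some augmentation idea of this flavor; as written, the plan does not get off the ground.
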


\begin{proof}
Set $L'_{\ell,\varepsilon}=\max(L_{\ell,\varepsilon},2\ell/\varepsilon)$, where $L_{\ell,\varepsilon}$ is the constant of Theorem \ref{ll1}. 
We call all edges of $\FF=E\backslash F$ \emph{dummy} (note that a dummy edge may be a loop).
The main idea is to replace every dummy edge by a pair of edges linking endpoints of the dummy edge to a big clique in order to obtain a simple graph to apply Theorem \ref{ll1}. 
For every dummy edge $e=v_{e,1}v_{e,2}$, we create a set of $L_{\ell,\varepsilon}+1$ new vertices $X_e=\{x_{e,1},...,x_{e,L_{\ell,\varepsilon}+1}\}$.
Let $E_e=\{x_{e,i}x_{e,j}: i\ne j\}\cup \{v_{e,1}x_{e,1},v_{e,2}x_{e,2}\}$. 
Let $G'$ be the multigraph with vertex set $\bigcup_{e\in \FF} X_e\cup V$ and edge set $E'=\bigcup_{e\in\FF}E_e\cup F$. 
It is immediate that $G'$ is simple and $d_G(v)=d_{G'}(v)$ for every $v\in V$, and so $G'$ has minimum degree at least $L$. 
Therefore $G'$ admits a decomposition $\PP'$ satisfying Theorem \ref{ll1}. 

For every dummy edge $e$ and every $i=1,2$, let $P'_{e,i}$ be the path of $\PP'$ containing $v_{e,i}x_{e,i}$. 
We denote by $P_{i,j}$ the longest possible subwalk of $P'_{e,i}$ such that $P'_{e,i}=...x_{e,i}v_{e,i}P_{e,i}...$ and all vertices of  $P_{e,i}$ belong to $V$. 
If $P_{e,i}$ reach the end of $P'_{e,i}$, we call $P_{e,i}$ an \emph{end-segment}; otherwise, we call it a \emph{middle-segment}. 
The reader may see here the similarity between end-segments and rays.
Clearly, if $P_{e,i}$ is a middle-segment, then $P'_{e,i}=...x_{e,i}v_{e,i}P_{e,i}
v_{e',j}x_{e',j}...$ for some dummy edge $e'$ and $j\in \{1,2\}$ since $P'_{e,i}$ leaves $V$ right after finishing $P_{e,i}$.
Note also that the lengths of end-segments and middle-segments are at most $\ell+1$ and possibly 0.

For every dummy edge $e$ and every $i=1,2$, we remove $X_e$ and $E_e$, and concatenate $P_{e,i}$ with $e$ at $v_{e,i}$. After this process, we obtain a family of walks, each lies in one of the following types:

\begin{itemize}
\item An uncovered $F$-path $P=P_1e_1P_2...e_{t-1}P_{t}$ with dummy edges $e_1,...,e_{t-1}$, end-segments $P_1$ and $P_{t}$, and middle-segments $P_2,...,P_{t-1}$. Note that the two end-segments are the rays of this uncovered $F$-path. Let $\PP_1$ be the set of all these uncovered $F$-paths together with all paths of $\PP'$ containing only vertices of $V$.

\item A circuit without endpoint, consisting of middle-segments alternate with dummy edges but no end-segments. Let $\PP_2$ be the set of all these circuits.
\end{itemize} 
 
Note that $\PP_1$ is a $F$-collection of $G$, and every edge of $G$ belongs to exactly one $F$-path $\PP_1$ or one circuit of $\PP_2$.
The method of concatenating ensures that for every $v\in V$, the number of rays from $v$ in $\PP_1$ is equal to number of paths with endpoint $v$ in $\PP'$.  
This gives $d_{\PP_1}(v)=d_{\PP'}(v)$. 
Besides, each ray of $\PP_1$ is the end-segment of some path of $\PP'$. 
Therefore two rays of $\PP_1$ are conflicting only if their corresponding paths in $\PP'$ are conflicting. Thus all of the following hold true:
\begin{itemize}
	\item Every ray of $\PP_1$ has length at most $\ell+1$, since it is either a path or an end-segment of some path of $\PP'$.
	\item Every covered $F$-path of $\PP_1$ has length at least $\ell$, since it is either a path of $\PP'$.
	\item ${\rm conf}_{\PP_1}(v|F) \leq {\rm conf}_{\PP'}(v) \leq 1/4(\ell + 10)$ for every vertex $v$.
	\item $(1-\varepsilon) d_G(v) \leq   \ell d_{\PP_1}(v)
	\leq (1+\varepsilon) d_G(v)$ for every $v$ since $d_{\PP_1}(v) = d_{\PP'}(v) $.
\end{itemize}

We now turn our attention to $\PP_2$. Every circuit $C\in\PP_2$ contains at least one dummy edge. We \emph{associate} $C$ with some vertex $v$ such that $v$ is the endpoint of some dummy edge of $C$. For every $v\in V$, let $C_1,...,C_t$ be the circuits (if any) associated with $v$, where every $C_s=ve_sW_sv$ with dummy edge $ e_s$. 
Let $\hat{P}_v=ve_1W_1 ve_2W_2...v e_tW_tv$ be the walk starting and ending at $v$ obtained by concatenating all $C_s$ in that fashion. 
Clearly, $\hat{P}_v$ is an uncovered $F$-path, of which one ray is $v$ (length 0) and another ray is $W_t$, a middle-segment of length at most $\ell+1$. 
Note that for every $v$, we have at most one such $\hat{P}_v$.
Let $\hat{\PP_2}=\{\hat{P}_v:v\in V\}$. Then $\hat{\PP_2}$ is an $F$-collection of $G$ and $U_{\PP_1} \cup U_{\hat{\PP_2}}=G$.
Hence $\PP=\PP_1\cup \hat{\PP_2}$ is an $F$-decomposition of $G$. 
Then every ray of $\PP$ has length at most $\ell+1$, and every covered $F$-path of $\PP$ has length at least $\ell$.

For every $v$, the number of rays from $v$ of $\PP$ is at most the number of rays from $v$ of $\PP_1$ plus two (two rays of $\hat{P}_v$ if it exists). Hence $d_{\PP_1}(v)\le d_\PP(v)\le d_{\PP_1}(v)+2$, and so by definition of conflict ratio, we have 
\begin{align*}
{\rm conf}_{\PP}(v|F) &\leq  \frac{d_{\PP_1}(v){\rm conf}_{\PP_1}(v|F)+2}{d_\PP(v)} \\
&\leq {\rm conf}_{\PP_1}(v|F)+ \frac{2}{d_\PP(v)}\\
&\frac{1}{4(\ell + 10)}+ \frac{2}{d_\PP(v)}\\
&\le  \frac{1}{4(\ell + 9)}. 
\end{align*} 

Finally, we have $(1-\varepsilon) d_G(v) 
\le \ell d_{\PP_1}(v) \leq \ell d_{\PP}(v)$. And since $L_{\ell,\varepsilon}'\ge 2\ell/\varepsilon$, we have 
$
\ell d_{\PP}(v)\le \ell (d_{\PP}(v) + 2) 
\le (1+2\varepsilon) d_G(v)$.
The proof is complete.
\end{proof}

Lemma \ref{lemma: highmin multi} gives us a good $F$-decomposition $\PP$ of $G$. We wish to concatenate the $F$-paths of $\PP$ to an eulerian tour. 
If $H_\PP$ has an eulerian tour, we naturally obtain an eulerian tour of $G$ by replacing each edge of $H_\PP$ by its corresponding $F$-path of $\PP$. Hence the goal is achieving the connectivity of $H_\PP$, which immediately yields eulerianity thank to the fact that every vertex of $H_\PP$ has even degree.

\begin{lemma}\label{lemma: 4ec multi} Under the same hypotheses of Lemma \ref{lemma: highmin multi} except that $G$ is 4-edge-connected with minimum degree at least $100 \ell L'_{\ell,\varepsilon}$,
there is an $F$-decomposition $\PP$ of $G$ satisfying:
\begin{itemize}
	\item Every ray of $\PP$ has length at most $\ell+3$.
	\item Every covered $F$-path of $\PP$ has length at least $\ell$.

\item   ${\rm conf}(\PP|F) \leq 1/2(\ell + 9)$.

\item $H_\PP$ is eulerian and spans $V(G)$.

\end{itemize}
\end{lemma}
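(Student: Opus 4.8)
The plan is to start from the $F$-decomposition $\PP_0$ provided by Lemma \ref{lemma: highmin multi} (applied with the given $\varepsilon$), which already has all rays of length at most $\ell+1$, all covered $F$-paths of length at least $\ell$, conflict ratio at most $1/4(\ell+9)$, and $d_{\PP_0}(v)$ proportional to $d_G(v)/\ell$; in particular $H_{\PP_0}$ has all degrees even but may be disconnected. The task is to merge the components of $H_{\PP_0}$ into a single connected (hence, by parity, eulerian) spanning multigraph, while only mildly increasing ray lengths (from $\ell+1$ to $\ell+3$) and the conflict ratio (from $1/4(\ell+9)$ to $1/2(\ell+9)$). The mechanism to do the merging should be Lemma \ref{lemma:13-tree}: since $G$ is $4$-edge-connected it is certainly $2$-edge-connected and loopless after we suitably treat dummy loops, so it has a collection $\QQ$ whose paths have length $1$ or $2$ and whose image $H_\QQ$ is a subcubic spanning tree. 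Overlaying this tree structure on $H_{\PP_0}$ forces connectivity.

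The key steps, in order, would be: (1) Apply Lemma \ref{lemma: highmin multi} to get $\PP_0$; note that because $d_G(v)\ge 100\ell L'_{\ell,\varepsilon}$ the degree $d_{\PP_0}(v)$ is large (at least, say, $(1-\varepsilon)d_G(v)/\ell \ge 50 L'_{\ell,\varepsilon}$), which gives us plenty of room to spend $O(1)$ edges per vertex on the merging without disturbing the conflict ratio much. (2) Remove from $\PP_0$ a small sub-collection of short pieces to free up edges of $F$ near each vertex — or alternatively work directly with the leftover edges — and run Lemma \ref{lemma:13-tree} on an appropriate $2$-edge-connected loopless subgraph of $G$ to obtain a collection $\QQ$ with paths of length $1$ or $2$ and $H_\QQ$ a subcubic spanning tree. (3) Absorb each short path of $\QQ$ into a neighbouring $F$-path of $\PP_0$ by concatenation at a shared endpoint: attaching a path of length $\le 2$ to the end of an $F$-path lengthens its ray by at most $2$, taking the bound from $\ell+1$ to $\ell+3$, and each vertex is an internal/endpoint of only $O(1)$ tree-paths (subcubic), so each vertex receives only a bounded number of new ray-incidences. (4) Check the four bullet points: ray length $\le \ell+3$ is immediate from step 3; covered $F$-paths of length $\ge \ell$ are untouched (the new material is appended, not inserted, and covered $F$-paths keep their length — or, if a covered path gets extended, it only gets longer); the conflict ratio estimate follows exactly as in the last display of the proof of Lemma \ref{lemma: highmin multi}, bounding ${\rm conf}_\PP(v|F)\le {\rm conf}_{\PP_0}(v|F) + O(1)/d_\PP(v) \le 1/4(\ell+9) + 1/4(\ell+9) = 1/2(\ell+9)$ since $d_\PP(v)$ is large; and connectivity of $H_\PP$ follows because $H_\PP$ contains a spanning tree — the image of $\QQ$'s tree, whose edges have all been realized as sub-walks of $F$-paths of $\PP$ — so $H_\PP$ is connected with all degrees even, hence eulerian and spanning.

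The main obstacle I expect is step (2)–(3): making sure the short merging paths $\QQ$ can be taken edge-disjoint from $\PP_0$ (or that we can reserve enough edges up front without destroying the length and conflict guarantees of $\PP_0$), and ensuring that the concatenations can all be performed simultaneously — i.e., that when we attach a $\QQ$-path to an $F$-path at a vertex $v$, the resulting walk is still an $F$-path (no forbidden repeated $F$-edge is created) and that two $\QQ$-paths don't compete for the same $F$-path endpoint at $v$. This is a bookkeeping argument: the subcubic tree gives each vertex degree at most $3$ in $H_\QQ$, so at most $3$ attachments per vertex, and since $d_{\PP_0}(v)$ is huge we have more than enough distinct $F$-paths of $\PP_0$ ending at $v$ to serve as attachment targets; the edge-disjointness is arranged by carving $\QQ$ out of a set of edges set aside before invoking Lemma \ref{lemma: highmin multi}, or by a small local exchange. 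The length-$\ell$ lower bound on covered $F$-paths is preserved for free, and the conflict bound survives because the added incidences are $O(1)$ against a denominator of order $d_G(v)/\ell \gg \ell$.
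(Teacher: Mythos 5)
Your high-level plan (build a short-path "skeleton" via Lemma~\ref{lemma:13-tree}, attach it to an $F$-decomposition from Lemma~\ref{lemma: highmin multi}, pay for the attachments out of the large slack in the conflict ratio) is the right idea and matches the paper's strategy, but two of the places you flag as "bookkeeping" are in fact genuine gaps, and one of your explicit claims is false.

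First, you never say how to produce a $2$-edge-connected loopless spanning subgraph $H$ of $G$ on which to run Lemma~\ref{lemma:13-tree} while keeping the residual graph $G\setminus U_\QQ$ of high minimum degree. This requires $H$ to have degree roughly $d_G(v)/2$ at every vertex, and an arbitrary pair of edge-disjoint spanning trees will not do (a spanning tree can concentrate degree). The paper gets this from Proposition~\ref{prop: arc-strong} (a balanced $2$-arc-strong orientation) followed by Proposition~\ref{prop: disj-arbor} (two arc-disjoint out-arborescences $T_1,T_2$), giving $d_{T_1\cup T_2}(v)\le d_D^+(v)+2\le d_G(v)/2+3$. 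Some such construction is indispensable; ``set aside edges up front or do a small local exchange'' does not supply it.

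Second, your connectivity claim --- ``$H_\PP$ contains a spanning tree, the image of $\QQ$'s tree'' --- is wrong as stated. When you attach a tree-path $Q=u\dots v$ to an $F$-path $P\in\PP_0(v)$ with other endpoint $y$, the new $F$-path $P^*$ has endpoints $u$ and $y$, not $u$ and $v$; in $H_\PP$ this realizes an edge $uy$, not $uv$, and simultaneously \emph{deletes} the edge $vy$ that $P$ contributed. So the tree of $H_\QQ$ is not reproduced in $H_\PP$, and a deletion can even disconnect a previously connected part of $H_{\PP_0}$. The paper handles this with an explicit iterative contraction: it maintains a tree $\mathcal{T}$ on a coarsening partition $\{Y_1,\dots,Y_t\}$, each $Y_i$ certified connected by a sub-collection $\mathcal R_i$, and at each step either concatenates $P$ with $g_1(P)$ (if $y$ lands outside $Y_i$) or keeps $g_1(P)$ as a connector and concatenates $P$ with $g_2(P)$ instead (if $y$ lands inside $Y_i$). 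Having \emph{two} reserved $F$-paths $g_1(P),g_2(P)$ per tree-path $P$, chosen so their rays are non-conflicting with $P$ and pairwise distinct across all tree-paths, is exactly what makes this case distinction possible; you acknowledge the need for non-conflicting attachment targets but do not construct them. The paper does so in a separate Claim, using a balanced orientation of $H_{\PP_1}$ and the $1/4(\ell+9)$ conflict bound.

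One further remark on the conflict-ratio bound: your single-application ``delta'' argument ${\rm conf}_\PP(v|F)\le{\rm conf}_{\PP_0}(v|F)+O(1)/d_\PP(v)$ can be made to work (at most $3$ rays added and $3$ removed per vertex gives ${\rm conf}'\le(c+3)/(d-3)$, which is fine for $d\gg\ell$), so this is a genuine, mildly simpler route than the paper's. The paper instead splits $G\setminus U_{\PP_0}$ into a tiny $G_1$ and a huge $G_2$ via de Werra's coloring (Proposition~\ref{werra}), performs all attachments inside the small $\PP_1$ (so its conflict ratio may degrade to $1$), and controls ${\rm conf}(\PP|F)$ purely by a weighted average against the untouched $\PP_2$. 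That two-collection device buys a conflict argument that never has to track how concatenation perturbs individual rays; your approach trades that cleanliness for one fewer invocation of Lemma~\ref{lemma: highmin multi}.
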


\begin{proof}
Let us first outline the proof.
We wish to obtain connectivity of $\PP$. To this end, we decompose $G$ into a collection $\PP_0$ satisfying Lemma \ref{lemma:13-tree} and two $F$-collections $\PP_1$ and $\PP_2$ satisfying Lemma \ref{lemma: highmin multi}. 
Then we use $\PP_0$, which contains only paths of short length, to tweak $F$-paths of $\PP_1$ to make $H_{\PP_1}$ connected. Finally, we merge $\PP_1$ with $\PP_2$ to obtain $\PP$, which inherits connectivity from $\PP_1$ and low conflict ratio from $\PP_2$.

Because $G$ is 4-edge-connected, by Proposition \ref{prop: arc-strong}, there is an orientation $D$ of $G$
such that $D$ is $2$-arc-strong and $|d^{+}_D(v)-d^{-}_D(v)|\le 1$ for every $v$.
Applying Proposition \ref{prop: disj-arbor} to $D$ with an arbitrary vertex $z$ gives us two arc-disjoint out-arborescences, $T_1, T_2$, rooted at $z$.  
Each vertex $v$ has indegree at most~$1$ in each $T_i$ ($z$ has indegree 0). This gives $d_{T_1 \cup T_2}(v)\le d_D^+(v)+2\le  d_G(v)/2+3$ for every vertex $v$ since $|d^{+}_D(v)-d^{-}_D(v)|\le 1$. Because $T_1\cup T_2$ is loopless and 2-edge-connected, we obtain a collection $\PP_0$ on $T_1\cup T_2$ satisfying Lemma \ref{lemma:13-tree}.

Let $G'=G\backslash U_{\PP_0}$. 
Then $d_{U_{\PP_0}}(v)\le d_{T_1 \cup T_2}(v)\le d_G(v)/2+3$, and so $G'$ has minimum degree at least $100\ell L_{\ell,\varepsilon}'/2-3\ge 48\ell L_{\ell,\varepsilon}'$. 
By Proposition \ref{werra}, $G'$ has an improper coloring by $45\ell$ colors
such that $|d_i(v)-d_j(v)|\le 4$ for every vertex $v$ and every 
pair of colors $i \neq j$. Let $G_1$ be the subgraph of $G'$ with edge set of the first color, and $G_2=G'\backslash G_1$. Thus
 $$d_{G_1}(v)\le \frac{1}{45\ell-1}d_{G_2}(v)+4\le \frac{d_{G_2}(v)}{40\ell}.$$
The minimum degrees of both $G_1$ and $G_2$ are at least $48\ell L_{\ell,\varepsilon}'/45\ell-4\ge L_{\ell,\varepsilon}'$. Therefore 
there are $F$-decompositions $\PP_1$ of $G_1$ and $\PP_2$ of $G_2$, both satisfying Lemma \ref{lemma: highmin multi}. 
Hence
$$
 d_{\PP_1}(v) 
\le 
\frac{1+2\varepsilon}{\ell}d_{G_1}(v)
\le  \frac{1+2\varepsilon}{40\ell^2} d_{G_2}(v)
\le  \frac{1+2\varepsilon}{40\ell(1-\varepsilon)}d_{\PP_2}(v),
$$ for every vertex $v$. 
Set $\varepsilon$ small enough such that for every $v$,

\begin{equation}\label{eq: H1}
d_{\PP_1}(v) 
\leq \frac{1}{4(\ell+9)} d_{\PP_2}(v)-3.
\end{equation}

We now turn our attention to the collection $\PP_0$ and the subcubic spanning tree $H_{\PP_0}$. 
Let us consider $H_{\PP_0}$ as a tree rooted at an arbitrary vertex $z$. In the following claim, we collect two private $F$-paths in $\PP_1$ for each path in $\PP_0$ for the process of concatenating later on.

\begin{claim}
For every path $P\in \PP_0$ with endpoints say $u,v$ where $v$ is the parent of $u$ in $H_{\PP_0}$, there are two $F$-paths of $\mathcal{P}_1(v)$, named $g_1(P)$ and $g_2(P)$, such that their rays from $v$ do not conflict with $P$ (if $g_i(P)$ is closed, one of its rays satisfying that condition is sufficient). Furthermore, $g_i(P)\ne g_j(P')$ for any $(i,P)\ne (j,P')$.

\end{claim}
\begin{proof}
We first apply Proposition \ref{prop: arc-strong} to have an orientation $D$ of $H_{\PP_1}$ such that $|d^-_{D}(v)-d^+_{D}(v)|\le 1$. This orientation yields a natural orientation of $F$-paths of $\PP_1$. We denote by $\mathcal{P}^+_1(v|F)$ the set of rays from $v$ of $\PP$ corresponding to $D$. Note that each closed $F$-path at $v$ contributes exactly one ray to $\mathcal{P}^+_1(v|F)$. This gives $|\mathcal{P}^+_1(v|F)|\ge d_{\PP_1}(v)/2-1$.

Since $H_{\PP_0}$ is subcubic, there are at most 3 paths of $\PP_0$ with endpoint $v$, say $P_s$ for $1\le s\le 3$. 
Note that each $P_{s}$ has length at most 2, and so they are incident with at most $6$ vertices except $v$ in total. Recall that ${\rm conf}_{\PP_1}(v|F) \leq 1/4(\ell + 9)$. For each vertex $w$ among these 6 possible vertices, we have
$$\Big| \{P_{v|F}\in {\cal P}_1(v|F) : w \in P_{v|F}\}\Big|\le \frac{d_{\PP_1}(v)}{4(\ell+9)}\le \frac{2|\mathcal{P}^+_1(v|F)|+2}{4(\ell+9)}\le \frac{|\mathcal{P}^+_1(v|F)|}{12}.$$
Hence in total there are at most $|\mathcal{P}^+_1(v|F)|/2$ rays of $\mathcal{P}^+_1(v|F)$ conflicting with some $P_s$.
This guarantees that there are at least half of rays in $\mathcal{P}^+_1(v)$ non-conflicting with all $P_s$. We just pick 6 rays among them, and name the $F$-paths of these rays $g_i(P_s)$ arbitrarily (these $F$-paths are clearly pairwise distinct).
Note also that $\mathcal{P}^+_1(v)\cap \mathcal{P}^+_1(v')=\emptyset$ for any $v\ne v'$, so $g_i(P)\ne g_j(P')$ for any $(i,P)\ne (j,P')$.
\end{proof}

We can now obtain the connectivity of $H_1$ by concatenating each $P$ of $\PP_0$ to either $g_1(P)$ or $g_2(P)$.
Let us call $\mathcal{T}$ a rooted tree on vertex set $\{Y_1,Y_2,...,Y_t\}$, where $\{Y_1,Y_2,...,Y_t\}$ is some partition of $V$ 
with the following properties:

\begin{enumerate}[label=(\Alph*)]

\item For every edge $Y_iY_j$ of $\mathcal{T}$, there is
a corresponding path $v_i...v_j \in \PP_0$, where 
$v_i \in Y_i$ and $v_j \in Y_j$.

\item For every $Y_i$, there is an $F$-collection $\mathcal{R}_i$ such that $H_{\mathcal{R}_i}$ is connected and spans $Y_i$, and each $F$-path in $\mathcal{R}_i$ is either $g_1(P)$ or the concatenation of $P$ and $g_1(P)$ for some $P\in \PP_0$ (if $Y_i$ contains a single vertex then $\mathcal{R}_i$ is empty). 

\end{enumerate}

Such structured-tree $\mathcal{T}$ clearly exists by choosing $\mathcal{T}$ equal to $H_{\PP_0}$ rooted at $z$, in which each
$Y_i$ contains a single vertex, and each $\mathcal{R}_i$ is empty.
Our goal is to repeatedly merge vertices of $\mathcal{T}$ until $\mathcal{T}$ is the singleton graph, which completes the process of concatenating. 
We consider a leaf $Y_i$ of $\mathcal{T}$ with parent $Y_j$, corresponding to path $P=v_i...v_j$ of  $\PP_0$ with $v_i\in Y_i$ and $v_j\in Y_j$. Suppose that $g_1(P)=v_j...y$ and $ g_2(P)=v_j...z$.  
\begin{itemize}
\item 
If $y\in Y_k$ for some $ k\neq i$,
we concatenate $P$ and $g_1(P)$ at $v_j$ and get a $F$-path $P^*$.
Then we merge $Y_i$ into $Y_k$ to form 
new set $Y_{ik}$ (inheriting the position of $Y_k$ in tree $\mathcal{T}$). Let  $\mathcal{R}_{ik}=\mathcal{R}_{i}\cup \mathcal{R}_{k}\cup \{P^*\}$. Since $P^*$ connects two vertices of $\mathcal{R}_i$ and $\mathcal{R}_k$, we have that $H_{\mathcal{R}_{i,k}}$ is connected and spans $Y_{ik}$.

\item If $y\in Y_i$, we merge $Y_i$ to $Y_j$ to form new set $Y_{ij}$ (inheriting the position of $Y_j$ in tree $\mathcal{T}$). Set $\mathcal{R}_{ij}=\mathcal{R}_{i}\cup \mathcal{R}_{j}\cup \{g_1(P)\}$. Since $g_1(P)$ connects two vertices of $\mathcal{R}_i$ and $\mathcal{R}_j$, we have that $H_{\mathcal{R}_{i,k}}$ is connected and spans $Y_{ij}$.
We also 
concatenate $P$ with $g_2(P)$ at $v_j$ to get another $F$-path and put it back into $\PP_1$. 
\end{itemize}

The number of vertices of $\mathcal{T}$ is reduced by $1$ after each step, while $\mathcal{T}$ still satisfies both properties.
Once the process is complete, we end up with a singleton $\mathcal{T}$ and an $F$-collection $\mathcal{R}$ such that $H_{\mathcal{R}}$ is connected and spans $V$. Note that $\PP_0$ is empty at the end of the process, since exactly one path of $\PP_0$ is used at each step. We merge $\mathcal{R}$ with $\PP_1$ to obtain a new collection $\PP_1'$. Consequently, $H_{\PP_1'}$ is connected.

Let $\PP=\PP_1'\cup \PP_2$. Note that $U_\PP=U_{\PP_1'}\cup U_{\PP_2}=G$, so $\PP$ is an $F$-decomposition of $G$ and $H_\PP$ is connected. The degrees of all vertices of $G$ are even, then so are the degrees of vertices of $H_\PP$, and hence $H_\PP$ is eulerian.
The process of concatenating also ensures that every ray of $\PP$ has length at most $\ell+3$ and that very covered $F$-path of $\PP$ has length at least $\ell$.

It remains to prove that ${\rm conf}(\PP|F) \leq 1/2(\ell + 9)$.
In the following, by saying $\PP_0$ or $\PP_1$, we mean the collection before the process of concatenating.
Recall that $H_{\PP_0}$ is subcubic, so for every vertex $v$, 
the number of $F$-paths with endpoint $v$ in $\PP'_1$ is at most the number $F$-paths with endpoint $v$ in $\PP_1$ plus 3.
Combining with (\ref{eq: H1}) yields 
$d_{\PP'_1}(v) \le d_{\PP_1}(v)+3 \le d_{\PP_2}(v)/4 (\ell+9)$. 
Recall that ${\rm conf}_{\PP_1'}(v|F)\le 1$ and ${\rm conf}_{\PP_2}(v|F)\le 1/4(\ell+9)$.
Hence for every vertex $v$, by definition of conflict ratio we have 
\begin{align*}
{\rm conf}_{\PP}(v|F) & \le \frac{d_{\PP_2}(v){\rm conf}_{\PP_2}(v|F)+d_{\PP_1'}(v){\rm conf}_{\PP_1'}(v|F)}{d_{\PP_2}(v)+d_{\PP_1}(v)}\\
& < {\rm conf}_{\PP_2}(v|F)+ \frac{d_{\PP_1'}(v){\rm conf}_{\PP_1'}(v|F)}{d_{\PP_2}(v)}\\
& \le \frac{1}{4(\ell+9)} 
+ \frac{1}{4(\ell+9)} \\
&\le \frac{1}{2(\ell + 9)}.
\end{align*}
This implies ${\rm conf}(\PP|F)\le 1/2(\ell+9)$, and the lemma follows. 
\end{proof}

The final step is concatenating $F$-paths of $\PP$ to obtain a well-behaved eulerian tour of $G$, which can be done thank to Proposition \ref{thm: Jackson}.

\begin{proof}[Proof of Theorem \ref{theorem: 4euler multi}]
Let $d_\ell=100 \ell L'_{\ell,\varepsilon}$ and $G'=(V,F)$.
We first obtain an $F$-decomposition $\PP$ of $G$ satisfying Lemma \ref{lemma: 4ec multi}. 
For every ray $P_{v|F}$ of $\PP$, each vertex $w\in P_{v|F}$ is a conflict point between $P_{v|F}$ and at most $d_\PP(v)/2(\ell + 9)$ other rays. Hence the number of rays conflicting with $P_{v|F}$ is at most $(\ell + 3) d_\PP(v)/2(\ell + 9) \leq d_\PP(v)/2 - 2$ since $P_{v|F}$ has length at most $\ell+3$.

We wish to apply Proposition \ref{thm: Jackson} to $H_\PP$.
Therefore the task now is to eliminate all loops of $H_\PP$. Let $H_\PP^*$ be the loopless multigraph obtained from $H_\PP$ by subdividing every loop $e=vv$ into $vx_e$ and $x_ev$ by a new vertex $x_e$. 
We associate each $vx_e$ and $x_ev$ with a ray of $P$, where $P\in \mathcal{P}(v)$ is the corresponding $F$-path of $e$.

For every pair of incident vertex-edge $(v,e)$ of $H_\PP^*$, let $S_v(e)$ be the set of all edges of $H_\PP^*$ corresponding to rays conflicting with $P_{v|F}$,  where $P_{v|F}$ is ray of $\PP$ corresponding to $e$. Since two rays of the same $F$-path are non-conflicting, we have $|S_{x_e}(e)|=0$ for every loop $e$ of $H_\PP^*$.
Hence $|S_{v}(e)|\le d_{H_\PP^*}(v)/2 - 2$ if $d_{H_\PP^*}(v)\ge 4$ and $|S_{v}(e)|=0$ if $d_{H_\PP^*}(v)=2$ for every pair of incident vertex-edge $(v,e)$ of $H_\PP^*$.

Let $\mathcal{S}=\{S_v\}_{v\in V}$, then $\mathcal{S}$ is a generalized transition system 
of $H_\PP^*$.
Proposition \ref{thm: Jackson} asserts that $H_\PP^*$ admits an eulerian tour $\mathcal{E}_{H_\PP^*}$ compatible with $\mathcal{S}$, i.e., the corresponding rays of any two consecutive edges of $\mathcal{E}_{H_\PP^*}$ are non-conflicting. 
Clearly, $vx_e$ and $x_ev$ are two consecutive edges of $\mathcal{E}_{H_\PP^*}$ since $x_e$ has degree 2. 
We therefore naturally obtain from $\mathcal{E}_{H_\PP^*}$ an eulerian tour $\mathcal{E}_{H_\PP}$ of $H_\PP$ by replacing $e$ to the segment $vx_ev$ for every loop $e=vv$ of $H_\PP$. 
Hence we naturally obtain from $\mathcal{E}_{H_\PP}$ an eulerian tour ${\mathcal{E}}$ of $G$ by replacing every edge of $\mathcal{E}_{H_\PP}$ by its corresponding $F$-path of $\PP$. Note that every two consecutive (with respect to $\EE$) rays of $\PP$ are non-conflicting.

Let $W$ be a segment of  $\mathcal{E}$ of length at most $\ell$ and consists of only edges of $F$. It remains to prove that $W$ is a path. 
Let $P_1,P_2...,P_r$ be consecutive (with respect to $\EE$) $F$-paths of $\PP$ such that $W$ is a subwalk of  $P_1P_2...P_r$ and $W\cap P_1,W\cap P_r\ne \emptyset$.
If $r\ge 3$ then $W$ must contain entirely $P_2$.
All edges of $W$ belong to $F$, then so does $P_2$.  Hence $P_2$ is a covered $F$-path of length at most $\ell-2$, contrary to that every covered $F$-path of $\PP$ has length at least $\ell$. 
If $r=2$, note that the rays from $v$ of $P_1$ and $P_2$ are non-conflicting, and $W$ is a subwalk of the concatenation of these two rays. Hence $W$ is a path. If $r=1$ then clearly $W$ is a path, the desired conclusion.
\end{proof}


\end{document}